\theoremstyle{plain}
\newtheorem{maintheorem}{Theorem}
\newtheorem{theorem}{Theorem}[section]
\newtheorem{corollary}[theorem]{Corollary}
\newtheorem{lemma}[theorem]{Lemma}
\newtheorem{proposition}[theorem]{Proposition}
\theoremstyle{remark}
\theoremstyle{definition}
\newtheorem{definition}[theorem]{Definition}
\newtheorem{remark}[theorem]{Remark}
\newcommand{\Z}{\mathbb{Z}}
\newcommand{\R}{\mathbb{R}}
\newcommand{\N}{\mathbb{N}}
\newcommand{\ie}{{\it i.e.}}
\DeclareMathOperator{\area}{Area}
\DeclareMathOperator{\length}{length}
\DeclareMathOperator{\sys}{sys}
\DeclareMathOperator{\dist}{dist}
\DeclareMathOperator{\card}{card}
\DeclareMathOperator{\Cut}{Cut}
\DeclareMathOperator{\expo}{exp}
\numberwithin{equation}{section}
\DeclarePairedDelimiter{\ceil}{\lceil}{\rceil}
\DeclarePairedDelimiter{\floor}{\lfloor}{\rfloor}
\newcommand{\mypm}{\mathbin{\mathpalette\@mypm\relax}}
\newcommand{\@mypm}[2]{\ooalign{%
  \raisebox{.1\height}{$#1+$}\cr
  \smash{\raisebox{-.6\height}{$#1-$}}\cr}}
\title{Short homotopically independent loops on surfaces.}
\author{Steve Karam}
\begin{document}

\begin{abstract}
In this paper, we are interested in short  
homologically and homotopically
independent loops based at the same point
on Riemannian surfaces and metric graphs.

First, we show that for every closed
Riemannian surface of
genus~$g~\geq~2$ and area  normalized to~$g$, there are
at least~$\ceil{\log(2g)+1}$ homotopically
independent loops based at the same point
of length at most~$C\log(g)$, where~$C$ is a universal constant.
On the one hand, this result substantially
improves Theorem~$5.4.A~$ of M.~Gromov in \cite{G1}.
On the other hand, it recaptures the result of S.~Sabourau on the
separating systole in \cite{SS} and  refines his proof.

Second, we show that for any two integers~$b\geq 2$ with~$1\leq n\leq b$, every
connected metric graph~$\Gamma$ of first Betti number~$b$ and
of length~$b$ contains at least~$n$ homologically
independent loops based at the same point and
of length at most~$24(\log(b)+n)$.
In particular, this result extends
Bollob\`as-Szemer\'edi-Thomason's~$\log(b)$ bound on the
homological systole to at least~$\log(b)$ homologically
independent loops based at the same point.
Moreover, we give examples of graphs where this result is optimal.

\end{abstract}

\address{Steve Karam, Laboratoire de Math\'ematiques et de Physique Th\'eorique,
UFR Sciences et Technologie, Universit\'e Fran\c cois Rabelais, Parc de Grandmont,
37200 Tours, France} \email{steve.karam@lmpt.univ-tours.fr}

\maketitle

\section{Introduction}\label{section.introduction}
Short homotopically and homologically  independent
loops on surfaces have been of a great interest.
Gromov proved in~\cite{G1} and~\cite{G2}
that both~$\sys(M)$, the systole,
i.e., the shortest non-contractible loop,
and~$\sys_H(M)$, the homological systole, 
i.e., the
shortest homologically nontrivial loop,
of a closed Riemannian surface~$M$ of
genus~$g\geq~2$  with area normalized
to~$4\pi(g-1)$ are at most~$\sim\log(g)$.
In~\cite{BPS}, F.~Balacheff, S.~Sabourau and H.~Parlier
found the maximal number of homologically independent loops 
of length at most~$\sim\log(g)$.
Their theorem goes as follows.

\begin{theorem}[\cite{BPS}] \label{thmBPS1}
Let~$\eta:\N \to \N$ be a function such that
$$
\displaystyle \lambda := \sup_g \frac{\eta(g)}{g} < 1.
$$
Then there exists a constant~$C_\lambda$ such that for every
closed Riemannian surface~$M$ of genus~$g$ there
are at least~$\eta(g)$ homologically
independent loops~$\alpha_{1},\ldots,\alpha_{\eta(g)}$ which satisfy
\begin{equation*}
\length(\alpha_{i}) \leq C_\lambda \, \frac{\log(g+1)}{\sqrt{g}} \, \sqrt{\area(M)},
\end{equation*}
for every~$i \in \{1,\ldots,\eta(g)\}$.
\end{theorem}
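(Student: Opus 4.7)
The plan is an iterative surgery using Gromov's homological systolic inequality at each stage. By scaling the metric I may assume $\area(M)=g$, so the target becomes $\length(\alpha_i)\leq C_\lambda\log(g+1)$.

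The base case $\alpha_1$ comes directly from Gromov's bound $\sys_H(M)\leq C\,\log(g+1)/\sqrt{g}\,\sqrt{\area(M)}$. Assume by induction that I have produced disjoint, simple, pairwise homologically independent loops $\alpha_1,\ldots,\alpha_k$, each of length at most $C_\lambda\log(g+1)$, for some $k<\eta(g)\leq\lambda g$. Cut $M$ along these loops and cap off the resulting boundary circles by small disks (for instance, by cone fillings over each loop), producing a closed Riemannian surface $\widetilde M_k$. Each cap can be chosen of area $O(\log^2(g+1))$, so $\area(\widetilde M_k)\leq g+O(k\log^2(g+1))=(1+o(1))g$ under our hypotheses. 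By arranging the $\alpha_i$'s to be non-separating in sequence, the genus of $\widetilde M_k$ is at least $g-k\geq(1-\lambda)g$.

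Applying Gromov's theorem to $\widetilde M_k$ now yields a homologically non-trivial loop $\beta_{k+1}$ with
\[
\length(\beta_{k+1})\leq C\,\frac{\log(g-k+1)}{\sqrt{g-k}}\,\sqrt{\area(\widetilde M_k)}\leq \frac{C}{\sqrt{1-\lambda}}\,\log(g+1).
\]
A shortest representative of this class avoids the interiors of the small caps, so $\beta_{k+1}$ descends to a loop $\alpha_{k+1}$ of the same length in $M\setminus(\alpha_1\cup\cdots\cup\alpha_k)$. A Mayer--Vietoris comparison between $M$ and $\widetilde M_k$ (relative to the thin neighborhoods of the $\alpha_i$ and their caps) identifies the class $[\alpha_{k+1}]\in H_1(M;\Z)$ as linearly independent of $[\alpha_1],\ldots,[\alpha_k]$, closing the induction at $k+1$.

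The main obstacle will be the surgery bookkeeping. The loops produced by the systolic inequality need not be simple or disjoint, so a preliminary step is to replace each $\alpha_i$ by a disjoint simple representative in its homotopy class without increasing length, by a shortest-representative argument. One also must ensure that each cut loop is non-separating at its stage, which can be enforced by a genericity argument and, if necessary, by doubling rather than capping when separation is unavoidable, so that the genus drops by exactly one at each step. The quantitative heart of the proof is the hypothesis $\lambda<1$: it keeps the residual genus $g-k$ linear in $g$ throughout the induction, which is what preserves the systolic bound of the form $\log(g+1)/\sqrt{g}\,\sqrt{\area}$ uniformly over all $\eta(g)$ steps and allows us to set $C_\lambda:=C/\sqrt{1-\lambda}$.
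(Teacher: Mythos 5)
This theorem is quoted from \cite{BPS}; the present paper does not reprove it, so there is no in-paper argument to compare yours against, and I can only judge the proposal on its own terms. It has a genuine quantitative gap, located exactly where you put the ``surgery bookkeeping'': the area of the caps. You assert $\area(\widetilde M_k)\le g+O(k\log^2(g+1))=(1+o(1))g$, but the induction must run for up to $k\approx\eta(g)\approx\lambda g$ steps, so the added area is of order $\lambda g\log^2(g+1)$, which \emph{dominates} $\area(M)=g$ rather than being negligible. Moreover the two properties you need from a cap are incompatible: for a shortest homologically nontrivial loop of $\widetilde M_k$ to be pushed off a cap glued along a circle of length $\ell$ without (more than boundedly) increasing its length, the cap must not shortcut between boundary points, and a coarea argument shows any such cap has area at least $c\,\ell^2$ (a flat disk, a thin rectangle, or any lower-area cap creates shortcuts between far-apart boundary points). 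So if $B(g)$ denotes the target length bound, the capped surface has area at least $g+c\lambda g\,B(g)^2$, and Gromov's inequality applied to $\widetilde M_k$ only returns a loop of length about $\log(g+1)\sqrt{1+\lambda B(g)^2}\gtrsim\sqrt{\lambda}\,\log(g+1)\,B(g)$, which exceeds $B(g)$ for large $g$ no matter how $B(g)$ is chosen. The induction therefore does not close for \emph{any} bound, let alone $C_\lambda\log(g+1)$; as written it only produces on the order of $g/\log^2(g+1)$ independent loops. This is the real reason the argument in \cite{BPS} is substantially more delicate than iterating the closed-surface systolic inequality on capped-off surfaces: one needs a version of the short-loop lemma that works relative to the curves already cut, without paying for fat caps at every stage.

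Two smaller remarks. The topological bookkeeping is the part that does work, and more easily than you suggest: a shortest homologically nontrivial loop is automatically simple and non-separating, $H_1(\widetilde M_k;\Z)$ identifies with $\langle[\alpha_1],\dots,[\alpha_k]\rangle^{\perp}/\langle[\alpha_1],\dots,[\alpha_k]\rangle$, and a nonzero class there pulls back to a class of $H_1(M;\Z)$ independent of the previous ones. So the ``genericity argument'', the ``doubling rather than capping'' fallback, and the preliminary replacement of each $\alpha_i$ by a ``disjoint simple representative in its homotopy class without increasing length'' are all unnecessary --- and the last of these is not actually achievable for a general homotopy class. The obstacle is metric (area), not homological.
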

\noindent Moreover, they constructed some hyperbolic surfaces
where their bound is optimal.

\bigskip

For the applications we have in mind (see Section~\ref{section.benefits}),
it would be nice if the loops in Theorem~\ref{thmBPS1} were based at the same point.
Unfortunately, the following example shows that in general,
we cannot  even find two homologically independent loops 
based at the same point satisfying a $\log(g)$ bound.
Indeed, let $M$ be a closed hyperbolic surface of genus $g$. 
Consider a family of $g+1$ loops in~$M$
dividing the surface into two spheres with $g+1$ boundary components.
Pinching these loops enough, we force (by the collar theorem)
every loop of $M$ homologically independent from this family 
to be arbitrary long.
Still, we obtain some result in this direction when the systole
is bounded from below, see Theorem~\ref{maintheorem1.with.lower.bound}.
\\

This leads us to replace the notion of homologically independent 
loops by the notion of homotopically independent loops defined below.

\begin{definition}
Let~$M$ be a closed Riemannian surface of genus at least one.
A family of loops~$(\alpha_1,...\alpha_k)$ 
based at the same point~$v$ in~$M$
are said to be \emph{homotopically independent} if
the subgroup of~$\pi_1(M,v)$ generated by~$\alpha_1,\ldots,\alpha_k$
is free of rank~$k$.
\end{definition}

\noindent Observe that~$k$ homologically independent loops based at the same 
point on a closed surface~$M$ of genus~$g$ are homotopically 
independent for~$k<2g$, see Theorem~\ref{jaco}.

\bigskip

Now we ask the following question: for how many  homotopically  independent loops based
at the same point does the~$\log(g)$ bound hold?
\\

\noindent One might wonder or even doubt 
the benefit of finding short homotopically independent 
loops based at the same point. We show the
benefits of such a choice in Section \ref{section.benefits}.
To the author best knowledge, the only
answer to the previous question is due to Gromov.

\begin{theorem}[\cite{G2}, 5.4.B]\label{thmGromov}
Let~$(M,h)$ be a closed Riemannian surface of
genus~$g\geq 2$ and of area normalized to~$g$.
For every~$\alpha<1$, there exist two
homotopically independent loops~$\gamma_1$ and~$\gamma_2$
based at the same point in~$M$ such that
$$sup(\length(\gamma_1),\length(\gamma_2))\leq C_\alpha\;g^{1-\alpha},$$
where~$C_{\alpha}$ is a positive constant that depends only on~$\alpha$.
\end{theorem}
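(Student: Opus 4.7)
The plan is to construct $\gamma_1$ and $\gamma_2$ by first fixing a sufficiently short non-contractible loop at a well-chosen basepoint $v$, and then producing a second homotopically independent loop via a cyclic-cover area argument. By Gromov's classical systolic bound in the genus $g \geq 2$ setting with area normalized to $g$, the homotopy systole at an appropriate $v$ satisfies $\length(\gamma_1) \leq C\log g$, which is negligible compared to the target $C_\alpha\, g^{1-\alpha}$. The real work is to bound the length of a second independent loop.

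For the second loop, consider the cyclic cover $p : \bar M \to M$ corresponding to $\langle \gamma_1 \rangle \leq \pi_1(M,v)$, with a fixed lift $\bar v$ of $v$. The dichotomy driving the argument is: either the covering map restricted to the open ball $B(\bar v, L/2) \subset \bar M$ is injective, in which case the ball embeds isometrically into $M$ and hence $\area\bigl(B(\bar v, L/2)\bigr) \leq g$, or else there exist distinct points $\bar x \neq \bar y$ in the ball with $p(\bar x) = p(\bar y)$. In the latter case, concatenating the projection of a path $\bar v \to \bar x$ with the reverse of the projection of a path $\bar v \to \bar y$ yields a loop $\beta$ at $v$ of length at most $L$ whose class lies outside $\langle \gamma_1 \rangle$. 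Since $\gamma_1$ is primitive (as the shortest non-contractible loop) and $\pi_1(M,v)$ is a torsion-free surface group of genus $g \geq 2$ whose subgroups are free or surface groups (and no 2-generated subgroup is a closed surface group of genus $\geq 2$), the pair $\gamma_1, \beta$ necessarily generates a free group of rank $2$, and is therefore homotopically independent in the sense of the paper's definition.

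It then suffices to show that $L := C_\alpha\, g^{1-\alpha}$ already forces non-injectivity of $p$ on $B(\bar v, L/2)$. The idea is to bound $\area\bigl(B(\bar v, L/2)\bigr)$ from below in $\bar M$: the translates $\gamma_1^n \bar v$ for $|n| \leq L/(2\length(\gamma_1))$ are distinct lifts of $v$ lying in the ball, and around each such translate a definite area — comparable to a ``thick'' region of $M$ around $v$ — is contained in the ball. By choosing $v$ in an appropriate sub-level set measuring local thickness (a parameter set by $\alpha$) and ruling out the thin parts via a collar-type argument, this lower bound exceeds $g$ once $L$ passes the threshold $C_\alpha\, g^{1-\alpha}$, contradicting the embedding and producing the desired pair.

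The principal obstacle is precisely this area lower bound: without any curvature hypothesis, balls in the cyclic cover $\bar M$ may have arbitrarily small area if $\bar v$ sits deep in a long thin tube of $M$. Overcoming this requires a careful choice of basepoint in a thick region whose measure degenerates as $\alpha \to 1$, and this is exactly what produces the blow-up of the constant $C_\alpha$ in the statement and the suboptimal exponent $1-\alpha$ (rather than a logarithmic bound). The parameter $\alpha$ quantifies the balance between the allowed thinness of the cover near $\bar v$ and the radius $L/2$ to which the area lower bound must extend.
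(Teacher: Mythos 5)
There is a genuine gap. First, a structural remark: the paper does not prove this statement at all --- it is quoted from Gromov~\cite{G2} (5.4.B), and the paper's actual contribution (Theorem~\ref{maintheorem1}) strengthens it by an entirely different route: surgery along a maximal family of short disjoint simple closed geodesics (chopping off maximal cylinders via Proposition~\ref{proposition.maximal.cylinder} to force $\sys \geq 1$), followed by the graph approximation of Proposition~\ref{lengthofminimalgraph} and the graph-theoretic Theorem~\ref{maintheorem2.bis}. So your covering-space proposal must stand on its own, and it does not.

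The decisive step --- the area lower bound for $B(\bar v, L/2)$ --- is both wrong as stated and missing in substance. As stated: in the cover $\bar M \to M$ associated to the subgroup $\langle \gamma_1\rangle \leq \pi_1(M,v)$, the loop $\gamma_1$ lifts to a \emph{closed} loop at $\bar v$, so the ``translates'' $\gamma_1^n\bar v$ all coincide with $\bar v$; they are not distinct lifts of $v$, and the counting mechanism collapses. If you instead work in the universal cover, the orbit points $\gamma_1^n\tilde v$ are distinct, but the disjoint balls around them have radius at most half the pointed systole at $v$, and there is \emph{no} general lower bound of the form $c\,r^2$ for the area of a radius-$r$ ball in a Riemannian surface (a ball at the tip of a long thin finger has area $\approx r\varepsilon$), so ``a definite area around each translate'' is unjustified. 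You correctly identify this as ``the principal obstacle,'' but the resolution you offer --- ``a careful choice of basepoint in a thick region whose measure degenerates as $\alpha\to1$'' --- is never defined, never shown to be nonempty, and never connected quantitatively to the threshold $C_\alpha g^{1-\alpha}$. Since the exponent $1-\alpha$ is precisely the output of that missing analysis, the proposal is a plan rather than a proof. (The group-theoretic part is fine: a systolic loop is simple and primitive, a homotopically nontrivial lift criterion does give $\beta\notin\langle\gamma_1\rangle$, and Theorem~\ref{jaco} plus primitivity yields a free group of rank $2$.)
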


\noindent

Note that Theorem~\ref{thmGromov} does not hold for~$\alpha=1$.
Indeed, P. Buser and P. Sarnak constructed in \cite{BS94} hyperbolic surfaces
with injectivity radius~$\sim\log(g)$ at every point.
We improve Theorem \ref{thmGromov} by showing the following result.

\medskip

Throughout this paper for a positive real number~$R$,
we denote by~$\ceil{R}$
the smallest integer greater or equal to~$R$.

\begin{maintheorem}\label{maintheorem1}
Let~$M$ be a closed Riemannian surface of genus~$g\geq 2$.
There are at least~$\ceil{\log(2g)+1}$
homotopically independent loops~$\alpha_1,\ldots ,\alpha_{\ceil{\log(2g)+1}}$
based at the same point in~$M$, such that
for every~$i\in \{1,...,\ceil{\log(2g)+1} \}$,
$$\length(\alpha_i)\leq C\frac{\log(g)}{\sqrt{g}}\sqrt{\area(M)},$$
where~$C$ is a universal constant independent from the genus.
\end{maintheorem}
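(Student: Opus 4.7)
My plan is to reduce Theorem A to the metric-graph statement announced in the abstract. In outline: I extract from $M$ a finite embedded graph $\Gamma$ which is a spine of $M$ and has controlled total length, I apply the metric-graph theorem to $\Gamma$ to extract many short, based, homologically independent loops, and finally I upgrade homological independence to homotopical independence on $M$ using the observation (Theorem~\ref{jaco}) that these two notions coincide at a common basepoint as long as the number of loops is strictly less than $2g$.

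\textbf{Setup and short spine.} After rescaling, I may assume $\mathrm{area}(M)=g$, so that the target bound becomes $\mathrm{length}(\alpha_i)\leq C\log g$. I seek an embedded graph $\Gamma \subset M$ whose complement is a disjoint union of open disks (equivalently, a spine of $M$), with first Betti number $b_1(\Gamma)=2g$ and with $\mathrm{length}(\Gamma)\leq C\,g$. One natural construction is iterative: apply Gromov's systolic inequality $\mathrm{sys}(S)\leq C\log(g(S))\sqrt{\mathrm{area}(S)/g(S)}$ to successive pieces of $M$ obtained by cutting along previously extracted short geodesic loops, and join these loops by short arcs into a connected spine. This is in the spirit of the cut-and-paste argument for the separating systole in Sabourau's paper, which the introduction says is recaptured and refined here.

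\textbf{Invoking the graph theorem.} Rescale $\Gamma$ so that its total length equals its Betti number $b=2g$. The metric-graph theorem stated in the abstract, applied with $n=\lceil\log(2g)+1\rceil$ (which satisfies $1\leq n\leq b$ as soon as $g\geq 2$), yields $n$ homologically independent loops $\bar\alpha_1,\dots,\bar\alpha_n$ in $\Gamma$ based at a single vertex, each of rescaled length at most $24(\log(2g)+n)=O(\log g)$. Undoing the rescaling multiplies all lengths by $\mathrm{length}(\Gamma)/(2g)=O(1)$, and produces loops $\alpha_1,\dots,\alpha_n$ in $M$, based at a common point $v$, of length at most $C\log g$.

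\textbf{Upgrading to homotopic independence.} Since $\Gamma$ is a spine of $M$, the inclusion induces an isomorphism $H_1(\Gamma)\to H_1(M)$, so the loops $\alpha_i$ remain homologically independent in $M$. Because $n=\lceil\log(2g)+1\rceil<2g$ for every $g\geq 2$, Theorem~\ref{jaco} upgrades homological to homotopical independence at the common basepoint~$v$, which finishes the proof.

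\textbf{Main obstacle.} I expect the hardest step to be the construction of the spine $\Gamma$ with $\mathrm{length}(\Gamma)\leq C\,g$: a single application of Gromov's systolic inequality gives one loop of length $O(\log g)$, but naive iteration both moves the basepoint and lets total length grow faster than $g$. Staying in the regime $\mathrm{length}(\Gamma)\leq C\,g$ requires carefully controlling, at each cutting step, the area and genus of the remaining pieces so that the lengths produced by the systolic bound are summable; this is essentially a quantitative version of Sabourau's cut-and-paste analysis, tuned so that the final length-to-Betti ratio of $\Gamma$ is bounded and can be fed into the metric-graph theorem.
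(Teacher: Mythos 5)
Your reduction works only in the regime where the homotopical systole of $M$ is bounded away from zero, and that case is precisely the paper's Theorem~\ref{maintheorem1.with.lower.bound}: take the graph of Proposition~\ref{lengthofminimalgraph}, apply Corollary~\ref{corollary.maintheorem2.bis}, and upgrade via Theorem~\ref{jaco}. The genuine gap is your ``short spine'' step, and it is not a technical obstacle but an impossibility: there is no graph $\Gamma\subset M$ carrying all of $H_1(M)$ with $\length(\Gamma)\leq Cg$ in general. Take a hyperbolic surface of genus $g$ (area $4\pi(g-1)$) with a non-separating geodesic $\gamma$ of length $\varepsilon$; its collar has bounded area but width $\sim 2\log(1/\varepsilon)$, which can be made arbitrarily large compared to $g$. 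Since the intersection form is nondegenerate, any graph inducing a surjection onto $H_1(M,\R)$ must contain a cycle with nonzero algebraic intersection with $\gamma$, hence of length at least the collar width. This is exactly why Proposition~\ref{lengthofminimalgraph} carries the factor $1/\min\{1,\ell\}$. More tellingly, the conclusion your argument would deliver --- $n\geq 2$ \emph{homologically} independent loops based at a common point of length $O(\log g)$ on an arbitrary surface of area $g$ --- is refuted by the pinching example given in the introduction, and even without the basepoint condition Theorem~\ref{thmBPS1} requires $\eta(g)/g<1$, so a spine with $b_1=2g$ and controlled length cannot be produced by iterating the systolic inequality.

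The paper's actual proof spends all of its effort on the small-systole case. It cuts $M$ along a maximal system of disjoint, pairwise non-homotopic simple closed geodesics of length at most $1$, excises from each a ``maximal'' cylinder using Proposition~\ref{proposition.maximal.cylinder} (with the cut locus of Section~\ref{section.cut.loci} providing the cylinders in the non-separating case), and reglues to obtain a surface $M_p$ of the same genus with $\sys(M_p)\geq 1$, to which the systole-bounded theorem applies. The loops so obtained may fail to live in the original $M$; whenever anything goes wrong (two crossing short curves, two nearby excised cylinders, a loop of $M_p$ crossing one of the regluing curves), the proof instead extracts two short non-commuting loops $a,b$ based at the same point and invokes Lemma~\ref{lemma.free.rank} to produce $b,aba^{-1},\dots,a^{n-1}ba^{-(n-1)}$. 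Note that these conjugates are all homologous to $b$, so this fallback produces homotopically independent loops that are \emph{not} homologically independent --- which is exactly why Theorem~\ref{maintheorem1} is stated for homotopical rather than homological independence, and why no argument that outputs homologically independent based loops can prove it.
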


Theorem \ref{maintheorem1} substantially improves Theorem~\ref{thmGromov}.
Under the same hypothesis as Theorem~\ref{thmGromov},
Theorem~\ref{maintheorem1} guarantees the existence
of~$\ceil{\log(2g)+1}$  homotopically independent loops
based at  the same point (instead of two)  of length roughly
bounded by~$\log(g)$ (instead of~$g^{\alpha}$).
Note that, if the homotopical systole of
the surface~$M$ in Theorem~\ref{maintheorem1}
is bounded away from zero,
then the~$\ceil{\log(2g)+1}$ loops can be even 
chosed to be  homologically independent
(see Theorem~\ref{maintheorem1.with.lower.bound}).
Also Theorem \ref{maintheorem1} recaptures
the following result  by S.~Sabourau.

\begin{theorem}[Sabourau, \cite{SS}]\label{theorem.sabourau.separating systole}
There exists a positive constant~$C$ such that every closed Riemannian surface~$M$
of genus~$g\geq 2$ and area normalized to~$g$, satisfies
$$\sys_0(M) \leq C \log(g),$$
where~$\sys_0(M)$ is denifed as the length of the shortest non-contractible
loop in~$M$ which is trivial in~$H_1(M,\Z)$.
\end{theorem}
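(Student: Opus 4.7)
The plan is to derive this theorem as a direct corollary of Theorem~\ref{maintheorem1}. First, applying Theorem~\ref{maintheorem1} to~$M$ (with $g\geq 2$), one obtains at least two homotopically independent loops~$\alpha_1,\alpha_2$ based at a common point~$v\in M$ satisfying
$$\length(\alpha_i)\leq C\,\frac{\log(g)}{\sqrt{g}}\sqrt{\area(M)}=C\log(g),$$
where the last equality uses the normalization $\area(M)=g$.

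Next, I would form the based commutator
$$\gamma:=\alpha_1\cdot\alpha_2\cdot\alpha_1^{-1}\cdot\alpha_2^{-1},$$
whose length is at most $2(\length(\alpha_1)+\length(\alpha_2))\leq 4C\log(g)$. The homotopy class of $\gamma$ lies in the commutator subgroup of $\pi_1(M,v)$, so it is trivial in $H_1(M,\Z)$. On the other hand, since $\alpha_1$ and $\alpha_2$ generate a free subgroup of $\pi_1(M,v)$ of rank two (by homotopic independence) and the commutator of two free generators is non-trivial in a free group of rank~$\geq 2$, the loop $\gamma$ is non-contractible. Hence $\sys_0(M)\leq\length(\gamma)\leq 4C\log(g)$, which is the desired bound up to renaming the constant.

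The main difficulty has already been absorbed into Theorem~\ref{maintheorem1}: the delicate point is producing two homotopically independent loops sharing a basepoint with the right length bound, because, as the pinched-surface example in the introduction illustrates, one cannot in general find two short \emph{homologically} independent loops based at the same point. Once Theorem~\ref{maintheorem1} provides such a pair, the passage to a short separating loop is purely algebraic, and only the first two of the $\ceil{\log(2g)+1}$ loops produced by that theorem are actually needed.
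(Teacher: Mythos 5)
Your argument is correct and is essentially identical to the paper's own derivation (see the discussion in Section~\ref{section.benefits} and the Corollary at the end of Section~\ref{section.proof.maintheorem1}): take two of the homotopically independent based loops supplied by Theorem~\ref{maintheorem1}, form their commutator, and observe that it is homologically trivial, non-contractible because the two loops generate a free group of rank two, and of length at most $4C\log(g)$. Nothing further is needed.
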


\noindent Note that  Sabourau  splits his proof into two cases.
In the first case, he supposes
\linebreak
that~$\sys_0(M)\leq 4 \sys(M)$
and then he deduces the result from Gromov's~$\log(g)$  bound on the systole.
Meanwhile, Theorem \ref{maintheorem1} provides a unified proof of this theorem
without refering to Gromov's asymptotic systolic inequality.

\medskip

Gromov's~$\log(g)$ bound on the systole has an  analog for
metric graphs. Note that for a metric graph~$\Gamma$, the homotopical
systole coincides with the homological systole.
We will  denote it  by~$\sys(\Gamma)$.
The best bound on the  systole of a metric graph is due to
B. Bollob\`as, E. Szemer\'edi and B. Thomason \cite{BS02}, \cite{BT}.
Specifically, they proved that
the systole of every connected metric graph
of first Betti number~$b\geq 2$, and length normalized to~$b$
is at most $4\log(b+1)$.

\medskip

Exactly as for surfaces, given a metric graph of first Betti
number~$b~\geq~2$ and of length normalized to~$b$,
one might wonder about the number  of
homologically independent loops based at the same point satisfying the
B.~Bollob\`as, E.~Szemer\'edi and B.~Thomason~$\log(b)$ bound.
We answer this question here.

\begin{maintheorem}\label{maintheorem2}
Let~$\Gamma$ be a connected  metric graph of first Betti
number~$b\geq 2$ and of length normalized~to~$b$. Let~$n\in \{1,\ldots ,b\}$.
There exist at least~$n$ homologically independent loops in~$\Gamma$ based at the same point
and of length at most~$24(\log(b)+n)$.
\end{maintheorem}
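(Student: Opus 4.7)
The plan is to find a basepoint $v\in\Gamma$ and a radius $R\le 12(\log b+n)$ for which the closed metric ball $B(v,R)\subseteq\Gamma$ has first Betti number at least $n$. Since $\Gamma$ is a graph, the inclusion $B(v,R)\hookrightarrow\Gamma$ induces an injection on $H_1$, so any basis of $H_1(B(v,R))$ gives $n$ homologically independent classes in $H_1(\Gamma)$. To realise them as short loops based at $v$, I would fix a shortest-path tree $T$ of $B(v,R)$ rooted at $v$; for each non-tree edge $e=(x,y)\subseteq B(v,R)$ the fundamental loop $\gamma_e$ based at $v$ (the $T$-path from $v$ to $x$, then $e$, then the $T$-path back to $v$) has length $\dist(v,x)+|e|+\dist(v,y)\le 2R$. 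Taking $n$ such $\gamma_e$ corresponding to a basis of $H_1(B(v,R))$ yields $n$ homologically independent loops based at $v$, each of length at most $2R\le 24(\log b+n)$.

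Two preliminary reductions organise the search for $(v,R)$. First, I would assume that every vertex of $\Gamma$ has degree at least three, by iteratively pruning leaves and suppressing degree-two vertices; both operations preserve $b$, cannot increase the total length, and force $|V(\Gamma)|\le 2(b-1)$. Second, I would apply the Bollob\`as--Szemer\'edi--Thomason bound to locate a cycle $C_0\subseteq\Gamma$ of length at most $4\log(b+1)$, and place $v$ on $C_0$; this already yields $\dim H_1(B(v,2\log(b+1)))\ge 1$, settling the case $n=1$.

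The heart of the argument is a ball-growth analysis of $k(r):=\dim H_1(B(v,r))$. Writing $V_r$ and $E_r$ for the vertices and edges of $\Gamma$ contained in $B(v,r)$, one has $k(r)=|E_r|-|V_r|+1$, and a handshake count using the min-degree-three hypothesis yields, for the sphere size $s(r)$ (the right derivative of $r\mapsto\length(B(v,r))$),
\[ s(r)\;\ge\;|V_r|+2-2k(r). \]
Suppose for contradiction that $k(r)\le n-1$ throughout $[0,R^*]$ with $R^*:=12(\log b+n)$, so that $s(r)\ge |V_r|+4-2n$. Interleaving vertex-crossing events with edge-closing events and tracking the invariant $s(r)+2k(r)=2+\sum_{u\in V_r}(d_u-2)$, which changes by $d_u-2j_u$ at a vertex crossing of $u$ with $j_u$ incoming shortest-path edges and by $0$ at each edge closing, one obtains a doubling-type recursion for $|V_r|$ once $|V_r|$ exceeds a threshold of order $n$. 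Combined with $|V_r|\le 2(b-1)$ and $\length(B(v,r))\le b$, this forces $|V_r|$ to saturate within $O(\log b+n)$ additional units of radius, whence $k(r)=b\ge n$, contradicting the hypothesis.

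The chief technical obstacle is adapting this doubling argument from the clean combinatorial BFS picture, where it straightforwardly produces the desired bound, to the metric setting in which vertex spacings and edge lengths vary freely and cycles can close at interior points of edges at non-vertex radii. The invariant above organises the bookkeeping, but selecting a discrete sequence of radii along which the doubling recursion applies, and summing the continuous gaps between them to a total of at most $R^*$ with the explicit constant $12$, is the delicate accounting that produces the claimed bound $24(\log b+n)$.
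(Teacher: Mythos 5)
Your overall skeleton---find a ball $B(v,R)$ with first Betti number at least $n$ and then use a shortest-path tree to produce based loops of length at most $2R$---is sound, and your preliminary reductions (minimum degree three, $|V(\Gamma)|\le 2(b-1)$, injectivity of $H_1(B(v,R))\to H_1(\Gamma)$, the $2R$ bound on fundamental loops) are all correct. But the heart of the argument, namely that $k(r)\le n-1$ throughout $[0,R^*]$ with $R^*=12(\log b+n)$ leads to a contradiction, is precisely where the proof is missing, and I do not believe the mechanism you sketch can close it. The inequality $s(r)\ge |V_r|+2-2k(r)$ controls the growth rate of the \emph{length} of the ball in terms of the number of vertices it already contains; it gives no lower bound on the rate at which $|V_r|$ itself increases as a function of $r$. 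In a metric graph of total length $b$ with at most $3b-3$ edges, a single edge may have length of order $b$, so the ball front can travel a radius increment of order $b$ without meeting a new vertex; during that stretch $s(r)$ is constant and no doubling of $|V_r|$ occurs. Worse, when $|V_r|<2n-2$ the bound degenerates to the trivial $s(r)\ge 1$, so you cannot even force the ball to reach the threshold $|V_r|\gtrsim 2n$ within radius $O(\log b+n)$; integrating $s\ge 1$ over $[0,R^*]$ only yields $12(\log b+n)\le b$, which is no contradiction when $n\ll b$. This long-edge phenomenon is exactly what makes the weighted Bollob\`as--Szemer\'edi--Thomason theorem (which you invoke as a black box for $n=1$) substantially harder than the unit-length girth bound, and the invariant $s(r)+2k(r)$ does not resolve it.

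The intermediate statement you are aiming for is true up to the constant (the paper in effect exhibits a point $a$ such that the ball of radius about $18\log b+4n$ around it has first Betti number at least $n$), but it is reached by a different route that you may want to adopt. The paper first extracts at least $b/2$ homologically independent simple cycles of length at most $12\log b$, by repeatedly deleting an edge from a systolic cycle and applying the Bollob\`as--Szemer\'edi--Thomason bound to the graph that remains; no ball-growth analysis is needed to produce many short cycles. It then runs a packing/pigeonhole argument: if every $4n$-neighbourhood of one of these cycles contained fewer than $n$ of the others, there would be more than $b/(2n)$ of them with pairwise disjoint $2n$-neighbourhoods, forcing $\length(\Gamma)>b$. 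Hence some cluster contains $n$ short cycles pairwise within distance $4n$, and conjugating each to a common vertex $a$ gives loops of length at most $12\log b+2(6\log b+4n)=24(\log b+n)$. Note finally that even if you were handed a ball of radius $18(\log b+n)$ with Betti number $n$, your spanning-tree construction would only give $36(\log b+n)$: the factor $24$ in the statement comes from conjugating the short cycles (so that only the access path is doubled, not the cycle itself), not from doubling the radius of a ball.
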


\noindent An interesting value of~$n$ is~$n=\floor{\log(b)}$,
$\ie$, the integral part of $\log(b)$.
In this case, Theorem~\ref{maintheorem2} asserts that
for every  connected metric graph~$\Gamma$
of first Betti number~$b~\geq~2$ and of length~$b$, there exist
at least~$\floor{\log(b)}$ homologically independent
loops based at the same point  of length at most~$48\log(b)$.
This extends B.~Bollob\`as, E.~Szemer\'edi and B.~Thomason~$\log(b)$
bound on the homological systole of~$\Gamma$
to~$\floor{\log(b)}$ homologically independent
loops of~$\Gamma$ based at the same point.
\\

One might wonder how far from being optimal Theorem \ref{maintheorem2} is.
We show that it cannot be substantially improved.
Indeed, let~$b$ and~$n$ be two integers such that~$b\geq 2$ and~$1\leq n\leq b$.
There exists a connected metric
graph of first Betti number~$b$ and length normalized to~$b$,
such that there are at most~$\floor{24(\log(b)+n)}+1$
homologically independent loops in~$\Gamma$ based at the same point
of length at most~$24(\log(b)+n)$ (\emph{cf.} Theorem \ref{maintheorem2.optimal}).
In particular, this result shows that for~$n\geq \ceil{\log(b)}$,
there exists a connected metric graph~$\Gamma$ of first Betti number
and length normalized to~$b$, such that there are at most~$52n$
homologically independent loops in~$\Gamma$ based at the same point
of length at most~$24(\log(b)+n)$.

\bigskip

This paper is organised as follows.
In Section \ref{section.benefits}, we show the benefits of short
homotopically independent loops based at the same point.
In Section \ref{section.proof.maintheorem2}, we give the proof of Theorem \ref{maintheorem2}.
In Section \ref{section.proof.maintheorem1.with.lower.bound}, we show how to extend
Theorem \ref{maintheorem2} to closed surfaces with systole bounded away from zero.
In Section \ref{section.cut.loci}, we show that on a given closed surface the cut locus
of a simple closed geodesic  captures its topology.
In Section \ref{section.proof.maintheorem1}, we prove Theorem \ref{maintheorem1}.
\\

\textbf{Acknowledgment}.
The author would like to thank his advisor, St\'ephane Sabourau,
for many useful discussions and valuable comments. 
He also would like to thank Florent Balacheff for reading and commenting this paper.

\section{Benefits of short homotopically independent loops based at the same point}\label{section.benefits}
In this section, we show two applications of homotopically independent loops based at the same point
of bounded length.
\\

Let~$M$ be a closed Riemannian surface  of genus~$g\geq 2$.
If~$\alpha$ and~$\beta$ are two homotopically
independent loops based at the same point in~$M$,
then~$$\sys_0(M)\leq \length(\alpha\beta\alpha^{-1}\beta^{-1}).$$
\\
In particular, if~$\sup(\length(\alpha),\length(\beta))\leq C\log(g)$,
then
$$\sys_0(M)\leq 4C\log(g).$$
Notice that the above observation allows us to recapture
the result of Theorem~\ref{theorem.sabourau.separating systole} on the separating systole
by means of Theorem~\ref{maintheorem1}.
Also we would like to point out that Gromov's upper bound~$C_\alpha\;g^{1-\alpha}$ on the length
of  two homotopically independent loops based
at the same point in Theorem~\ref{thmGromov} is not sufficient to prove
that the length of the separating systole of a closed Riemannian surface
of genus~$g\geq 2$ and area~$g$ is bounded above by~$\sim\log(g)$.

\bigskip

Another use of homotopically  independent loops based at the same point~$v$
of a closed Riemannian surface~$M$, is to contribute
to the area of balls centered at a lift~$\widetilde{v}$ of~$v$ in
the universal cover~$\widetilde{M}$ of~$M$.
Let us clarify this idea here.
Consider a system~$S=\{ \alpha_1,~\ldots ,~\alpha_k \}$ of pairwise
non-homotopic loops based at~$v$. Let
$$L=\sup_{1\leq i \leq k}\length(\alpha_i).$$
Denote by~$s$  half  the systole of~$M$ at the point~$v$,~$\ie$ half
the length of the shortest non contractible loop
based at~$v$. Let~$H'_r$ (resp~$N_r$) be
the set of elements of~$H=\langle S \rangle$
(resp~$\pi_1(M,v)$) of length less than~$r$,
where the length of~$\alpha\in\pi_1(M,v)$ is defined
as~$\length(\alpha)=\dist(\widetilde{v},\alpha.\widetilde{v})$.
It is the minimal length of a loop based at~$v$ representing~$\alpha$.
Let~$R>s+L$. Consider the ball~$B=B_{\widetilde{M}}(\widetilde{v},r_0)$,
where~$r_0=R-s$.
Every element~$\gamma_i$ of~$N_{r_0}$ yields a
point~$\widetilde{v}_i=\gamma_i.\widetilde{v}$ in~$B$.
The balls~$B_{\widetilde{M}}(\widetilde{v}_i,s)$
are disjoint and of the same area. We have
\begin{eqnarray}
\area B_{\widetilde{M}}(\widetilde{v},R)\geq \card(N_{r_0}) \area B_M(v,s),
\end{eqnarray}
where~$\card(N_{r_0})$ is the cardinal of~$N_{r_0}$.
\\
Also notice that
\begin{eqnarray}
\card(N_{r_0})\geq \card(H'_{r_0}).
\end{eqnarray}
Thus, a lower bound on the cardinal~$\card(H'_{r_0})$
of~$H'_{r_0}$ yields also a lower bound on~$\card(N_{r_0})$.
One way to bound~$\card(H'_{r_0})$ from below is the following.
We define a norm~$\parallel \;\; \parallel$ on~$H$ as follows.
For~$\beta$ in~$H$, we define the word length~$\parallel \beta \parallel$
of~$\beta$ as the smallest integer~$n$ such
that~$\beta=\gamma_1 \ldots \gamma_n$
where~$\gamma_i \in S\cup S^{-1}$.
Denote by~$H_{r}^w$ the set of elements of~$H$ of word length less than~$r$.
We have
\begin{eqnarray}
\card(N'_r)\geq \card(H_{r/L}^w).
\end{eqnarray}
Combining~$(3.1)$,~$(3.2)$ and~$(3.3)$ we got
\begin{eqnarray}
\area B_{\widetilde{M}}(\widetilde{v},R)\geq \card(H_{r/L}^w) \area B_M(v,s).
\end{eqnarray}
Now let~$r'>1$. Notice that~$H_{r'}^w$ is maximal
if~$H$ is free of rank~$k$. That is guaranteed if
the loops~$\alpha_1,\ldots \alpha_k$
are homotopically independent in~$M$.
It is now clear how homotopically
independent loops based at the same point~$v$
contribute to the area to the balls centered at  points in the
fiber over~$v$ in~$\widetilde{M}$ whenever the radii~$R$ of these balls
is longer than~$s+L$. Moreover, since~$R$ must be at least~$s+L$,
it is straightforward to see that  the shorter the~$L$,  the better the result.
This means that the upper bound of the lengths of the~$\alpha_i$'s is also important.

\section{Short homologically  independent loops on  graphs}\label{section.proof.maintheorem2}

In this section we prove Theorem \ref{maintheorem2}.
Recall that this theorem extends the Bollob\`as-Szemer\'edi-Thomason~$\log(b)$ bound
on the homological systole of graphs to~$\ceil {\log(g)}$ homologically
independent loops based at the same point.

\bigskip

First let us recall  some definitions.
By definition, a graph $\Gamma$ is a  finite one-dimensional CW-complex
(multiple edges and loops are allowed).
The first Betti number  of a graph~$\Gamma$ can be computed as follows:
$$b(\Gamma)=e-v+n,$$
where~$e,v$ and~$n$ are respectively the number of edges,
vertices and connected components of~$\Gamma$.
A metric graph~$(\Gamma,h)$ is a graph endowed with a length space metric~$h$.
The length of a subgraph of~$\Gamma$ is its one-dimensional Hausdorff measure.
For more details on graphs we refer the reader to \cite{DGT}.

\begin{definition}
Let~$\Gamma$ be a connected graph of first Betti number $b\geq 1$.
A family of loops~$(\alpha_1,...\alpha_k)$ in~$\Gamma$
is said to be \emph{homologically independent}
if their homology classes in~$H_1(\Gamma,\R)$ are.
\end{definition}

\noindent Note that this definition extends also to closed Riemannian manifolds.

\bigskip

\noindent Now we prove Theorem~\ref{maintheorem2}.

\begin{theorem}\label{maintheorem2.bis}
Let~$\Gamma$ be a connected  metric graph of first Betti
number~$b\geq 2$ and of length normalized~to~$b$. Let~$n\in \{1,\ldots ,b\}$.
There exist at least~$n$ homologically independent loops in~$\Gamma$ based at the same point
and of length at most~$24(\log(b)+n)$.
\end{theorem}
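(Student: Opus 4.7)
My plan is to find a single base point $p \in \Gamma$ together with a radius $r \leq 6(\log b + n)$ such that the closed ball $B(p,r)$ has first Betti number at least $n$. Given such a pair, a shortest-path spanning tree of $B(p,r)$ rooted at $p$ distinguishes $n$ non-tree edges inside the ball; the associated fundamental cycles form $n$ homologically independent loops based at $p$, each of length at most $4r \leq 24(\log b + n)$, since the two tree paths from $p$ to the endpoints of a non-tree edge each have length at most $r$ and any edge contained in $B(p,r)$ is a geodesic whose endpoints lie within distance $r$ of $p$, hence has length at most $2r$.

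\medskip

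First, I would select the base point using the Bollob\'as--Szemer\'edi--Thomason inequality: there is a cycle in $\Gamma$ of length at most $4\log(b+1)$, and placing $p$ on this cycle immediately gives $b(B(p, 2\log(b+1))) \geq 1$, which handles the case $n=1$ with room to spare.

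\medskip

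The central technical step is an iterative \emph{cycle accumulation} lemma. Let $r_k$ denote the smallest radius at which $b(B(p,r_k)) \geq k$. I would show by induction that $r_{k+1} - r_k$ is bounded by an absolute constant for each $k = 1, \ldots, n-1$. The strategy is a Mayer--Vietoris-type decomposition $\Gamma = B(p,r) \cup \overline{\Gamma \setminus B(p,r)}$: as long as $b(B(p,r))$ stalls at $k$ while $r$ grows, the complement $C_r := \overline{\Gamma \setminus B(p,r)}$ carries at least $b - k$ independent cycles (up to boundary corrections involving $|\partial B(p,r)|$ and the number of components of $C_r$). Applying a BST-style estimate to a component of $C_r$ meeting $\partial B(p,r)$ locates a new short cycle near the boundary, and increasing $r$ by a fixed constant absorbs this cycle into the ball, pushing the Betti number up to $k+1$. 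Summing the increments yields $r_n \leq 2\log(b+1) + c(n-1) \leq 6(\log b + n)$ with suitable constants.

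\medskip

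\textbf{Main obstacle.} The crux is establishing the cycle-accumulation lemma with a uniform additive cost in $r$, independent of $b$ and $k$. A priori, cycles produced by BST inside $C_r$ could sit far from $\partial B(p,r)$, requiring a long connecting path back to $p$ and breaking the desired $O(1)$ increment. One must therefore argue that either a short cycle always exists close to the boundary of $B(p,r)$, or else the length of the ball grows sufficiently fast in the absence of new cycles so that the total length budget $|\Gamma| = b$ forces the next cycle to be absorbed within a bounded number of radius units. Delicately balancing ball-length growth against the cycle-arrival rate, together with careful tracking of the multiplicative constants to recover the explicit factor $24$ in the final bound, is the principal technical challenge.
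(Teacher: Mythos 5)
Your reduction of the theorem to the statement ``there is a point $p$ and a radius $r\leq 6(\log b+n)$ with $b\bigl(B(p,r)\bigr)\geq n$'' is reasonable, and the extraction of $n$ based homologically independent loops from a shortest-path spanning tree of such a ball is essentially correct (modulo minor care with edges that are much longer than the distance between their endpoints). The problem is that the central step you rely on --- the ``cycle accumulation lemma'' asserting $r_{k+1}-r_k=O(1)$ for a fixed base point $p$ --- is false, and you have not supplied a substitute. Counterexample: attach $n-1$ cycles of negligible length at a point $q$, attach the remaining $b-n+1$ such cycles at a point $q'$, and join $q$ to $q'$ by an arc carrying essentially all of the length $b$. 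If the Bollob\'as--Szemer\'edi--Thomason step happens to place $p$ at $q$, then $r_1=\dots=r_{n-1}\approx 0$ while $r_n\approx b$. Your fallback (``the ball grows fast during a stall, so the length budget forces the next cycle to arrive soon'') only shows that the \emph{total} stall time is at most $b$, since the ball may grow at rate $1$ along a single boundary arc; it does not give $O(1)$ per step, nor $O(\log b+n)$ in total. Likewise, a BST estimate applied to a component of the complement need not produce a \emph{short} cycle: that component can have small Betti number and large length, so its girth bound is useless.

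What is missing is a \emph{global} argument for choosing the base point, and this is exactly where the paper's proof differs. The paper first produces $\lceil b/2\rceil$ homologically independent \emph{short} cycles (each of length at most $12\log b$) by repeatedly deleting an edge of a systolic loop and reapplying the girth bound to the shrinking subgraphs, and then runs a packing/pigeonhole argument: if no cycle had $n$ of the others within distance $4n$ of it, one could find more than $b/(2n)$ representatives whose pairwise disjoint $2n$-neighbourhoods would each contribute length at least $2n$, exceeding the total length $b$. The cluster of $n$ short cycles within distance $4n$ of a single one is then rebased at a common point, giving loops of length at most $24\log b+8n$. Some counting argument of this kind (selecting \emph{where} the $n$ cycles concentrate, rather than growing a ball from a fixed point) is indispensable; without it your proposal has a genuine gap at its main step.
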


\begin{proof}
By definition of the first Betti number~$b$,
there exist~$b$ homologically independent
loops~$\alpha_1,\ldots,\alpha_b$ in~$\Gamma$.
Fix a point~$x$ of~$\alpha_1$.
For~$i=1,\ldots,b$, let~$C_i$ be a minimizing curve from~$x$ to~$\alpha_i$.
We have~$\length(C_i\alpha_i {C_i}^{-1})  \leq \length(C_i)+\length(\alpha_i)+\length(C_i)$ .
Notice that  $\length(C_i)+\length(\alpha_i) \leq b$.
Thus, there exists~$b$
homologically independent loops in~$\Gamma$ based
at the same point  of length at most~$2b$
($\leq 24(\log(b)+\frac{b}{2})$).
This yields the desired result for~$n\in\{\frac{b}{2},b\}$.
Now we consider the case when~$n<\frac{b}{2}$.
In particular, we suppose~$b\geq 3$.
By a short cycle of~$\Gamma$ we
mean a simple loop of length at most~$12\log(b)$.
Let~$X$  be a maximal set of
homologically independent short cycles
of~$\Gamma$ and denote by~$N$ its cardinal.
We claim that
$$N\geq \frac{b}{2}.$$
Indeed, we construct~$k=\ceil{\frac{b}{2}}$
graphs~$\Gamma_{k} \subset \ldots \subset \Gamma_1=\Gamma$ and~$k$ simple loops as follows.
Remove an edge from a systolic loop~$\gamma_1$
of~$\Gamma_1$ and denote by~$\Gamma_2$ the resulting graph.
The graph~$\Gamma_2$ is connected and of first
Betti number~$b_2~=~b-1$.
Now remove an edge from a systolic loop~$\gamma_2$ of~$\Gamma_2$
and denote by~$\Gamma_3$ the resulting graph.
By induction, we keep doing this until we get~$\Gamma_k$.
From the inequality~$(1.1)$ and since~$k=\ceil{\frac{b}{2}}$
we have for every~$i=1,\ldots, k$,
\begin{eqnarray}
\length(\gamma_i) &\leq& 4\frac{\log(1+b-i+1)}{b-i+1}\length(\Gamma_i) \nonumber \\
                  &\leq & 12\log(b). \nonumber
\end{eqnarray}
By construction, the~$k$ loops~$\{\gamma_i \}_{i=1}^k$
are homologically independent in~$\Gamma$. So the claim is proved.
\\
We divide the set~$X$ as follows.
Take any element~$\alpha_1$ of~$X$  and denote by~$Y_1$
the set~$\{\beta\in X \; | \; \dist(\beta ,\alpha_1)~\leq~4n \}$.
Let~$\alpha_2$ be an element of~$X \setminus Y_1$
and denote by~$ Y_2$ the set~$\{\beta\in X \; | \; \dist(\beta ,\alpha_2)~\leq~4n \}$.
By induction we continue  this process which eventually ends since~$X$ is finite.
Let~$\alpha_j\in X$ be the last short cycle obtained from this process,~$\ie$,
let~$\alpha_j$ be an element
of~$X~\setminus~Y_1~\cup~...~\cup~Y_{j-1}~$
such that~$Y_1 \cup \ldots \cup Y_{j-1}\cup Y_{j}=X$.
For~$i=1,\ldots,j$, we denote by~$T_i$ the cardinal of~$Y_i$.
We claim that there exists an~$i_0$ such that
$$T_{i_0}\geq n.$$
Indeed, suppose the opposite.
We have
$$\frac{b}{2}\leq N=\card(X)\leq \displaystyle\sum\limits_{i=1}^{j}T_i < jn .$$
So~$j>\frac{b}{2n}>1$. For~$i\neq i'$, we have
$\dist (\alpha_i,\alpha_{i'})>4n$. This means that the
open neighborhoods of radius~$2n$ around
the~$\alpha_i's$ are pairwise disjoint.
Since~$\Gamma$ is connected, the length of the
neighborhood of radius~$2n$ around each
short cycle~$\alpha_i$ is at least~$\length(\alpha_i)+2n$.
This implies that
$$\length(\Gamma)> 2nj > b.$$
Hence a contradiction.
So there is an~$i_0$ such that~$T_{i_0} \geq n$.
\\
Now fix a vertex~$a$ of~$\alpha_{i_0}$ and  let~$\beta$
be any element of~$Y_{i_0}\setminus \{\alpha_{i_0}\}$.
Let~$b$ and~$c$ be two vertices of~$\alpha_{i_0}$ and~$\beta$
respectively such that~$\dist(\alpha_{i_0},\beta)=~\dist(b,c)$.
Also, let~$C_{ab}$ be a minimizing curve from~$a$ to~$b$ and
$C_{bc}$ be a minimizing curve from~$b$ to~$c$.
The following holds.
\begin{itemize}
\item~$\length(C_{ab})~\leq~\length(\alpha_{i_0})/2$
\item~$\length(C_{bc})~\leq~4n$.
\end{itemize}
The loop~$\beta'=C_{ab}C_{bc}\beta C_{cb}C_{ba}$
is homologuous to~$\beta$ and satisfies
$$\length(\beta')\leq 24\log(b)+8n.$$
So the~$T_{i_0}$ short cycles of~$Y_{i_0}$ give rise to~$T_{i_0}$
homologically independent loops of~$\Gamma$ based at the
same point~$a$ and of length at most~$24(\log(b)+n)$.
\end{proof}

\begin{corollary}\label{corollary.maintheorem2.bis}
Let~$\Gamma$ be a connected metric graph of
first Betti number~$b\geq~2$. Let~$n\in\{1,\ldots,b\}$.
There exist at least~$n$ homologically independent
loops in~$\Gamma$ based at the same point
of length at most~$24(\log(b)+~n)~\frac{\length(\Gamma)}{b}$.
\end{corollary}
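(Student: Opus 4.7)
The plan is to deduce Corollary~\ref{corollary.maintheorem2.bis} from Theorem~\ref{maintheorem2.bis} by a straightforward scaling argument. The theorem treats graphs whose total length has been normalized to the first Betti number $b$, while the corollary removes this normalization; since the first Betti number is a purely combinatorial/topological invariant of the underlying CW-complex, it is unchanged by rescaling the metric, so scaling is the only adjustment needed.

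Concretely, I would let $L = \length(\Gamma) > 0$ and consider the rescaled metric graph $\Gamma' = \tfrac{b}{L}\cdot \Gamma$, meaning the graph with the same underlying CW-structure but whose metric $h'$ satisfies $h' = \tfrac{b}{L}\,h$. Then $\length(\Gamma') = \tfrac{b}{L}\cdot L = b$ while the first Betti number of $\Gamma'$ is still $b$, so $\Gamma'$ satisfies the hypotheses of Theorem~\ref{maintheorem2.bis}. Applying that theorem with the given integer $n \in \{1,\ldots,b\}$ yields a point $v \in \Gamma'$ and $n$ homologically independent loops $\alpha_1',\ldots,\alpha_n'$ based at $v$ with
\[
\length_{h'}(\alpha_i') \leq 24\bigl(\log(b)+n\bigr)
\quad \text{for every } i \in \{1,\ldots,n\}.
\]

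To conclude, I would view the same loops $\alpha_i = \alpha_i'$ in $\Gamma$ endowed with the original metric $h$. Since $h = \tfrac{L}{b}\,h'$, lengths in $\Gamma$ are obtained from lengths in $\Gamma'$ by multiplication by $L/b$, so
\[
\length_h(\alpha_i) = \frac{L}{b}\,\length_{h'}(\alpha_i') \leq 24\bigl(\log(b)+n\bigr)\,\frac{\length(\Gamma)}{b},
\]
and homological independence of the $\alpha_i$ is preserved because it depends only on the underlying topological cycles in $H_1(\Gamma,\mathbb{R})$, which is identified with $H_1(\Gamma',\mathbb{R})$. This delivers the required $n$ loops based at the common point $v$ and completes the proof.

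There is no substantive obstacle here: the only thing to verify is the scale-invariance of the combinatorial data (the Betti number and the homology classes of loops), together with the linear behaviour of one-dimensional Hausdorff measure under a uniform rescaling of the metric, both of which are routine.
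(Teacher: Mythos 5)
Your proposal is correct and is exactly the rescaling argument the paper leaves implicit (the corollary is stated without proof as an immediate consequence of Theorem~\ref{maintheorem2.bis}): normalize the metric by the factor $b/\length(\Gamma)$, apply the theorem, and scale back, noting that the Betti number and homological independence are metric-independent. Nothing further is needed.
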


\noindent Before stating our next theorem, we construct a connected metric
graph~$\Gamma_\star$ that will be useful to the rest of this section.
Let~$m$ and~$p$ be two positive integers with~$m \geq p$.
Denote by~$q$ and~$r$  the quotient and the remainder
in the division of~$m$ by~$p$,
that is,~$m=pq+r$ with~$r\in\{0,\ldots,p-1\}$.
Also let~$L$ and~$l$ be two positive constants.
\\
Fix a vertex~$v$.
We construct~$q$ bouquets~$X_1,\ldots,X_q$ of~$p$ circles
and a bouquet~$X_{q+1}$ of~$r$ circles.
We define~$\Gamma_\star$ by joining
the vertex of each bouquet~$X_i$
to the vertex~$v$ by an edge~$w_i$. See Figure~1.
We define a metric~$h$ on~$\Gamma_\star$ such that~$(\Gamma_\star,h)$ is
a length metric space as follows.
For~$i=1,\ldots,q$, set~$\length(w_i)=~L$, and~$\length (X_i)=~l$.
Also set~$\length(X_{q+1})+ \length(w_{q+1})=r$.
It is straightforward to see that
the graph~$\Gamma_\star$ is connected,
of first Betti number~$m$ and of length~$q(L+l)+r$.
We claim that  there are at most~$p+r$~$(\leq 2p-1)$
homologically independent loops  based
at the same point of length at most~$2L$.
Indeed, notice that there exist at
most~$r$ homologically independent loops based at~$v$
of length less than~$2L$. So let~$m$ be any point
of~$\Gamma_\star$ other than the point~$v$.
There exists a unique~$i$
such that~$m\in X_i\cup w_i$. Now notice that if we want to find more
than~$p+r$ homologically independent loops based at~$m$,
one of them must cross at least two times one of the
edges~$w_j$, with~$j\in \{1,\ldots, q \}\setminus \{i\}$.
Thus, the length of this loop exceeds~$2L$.

\begin{figure}[H]
\begin{center}
\includegraphics[height=45mm]{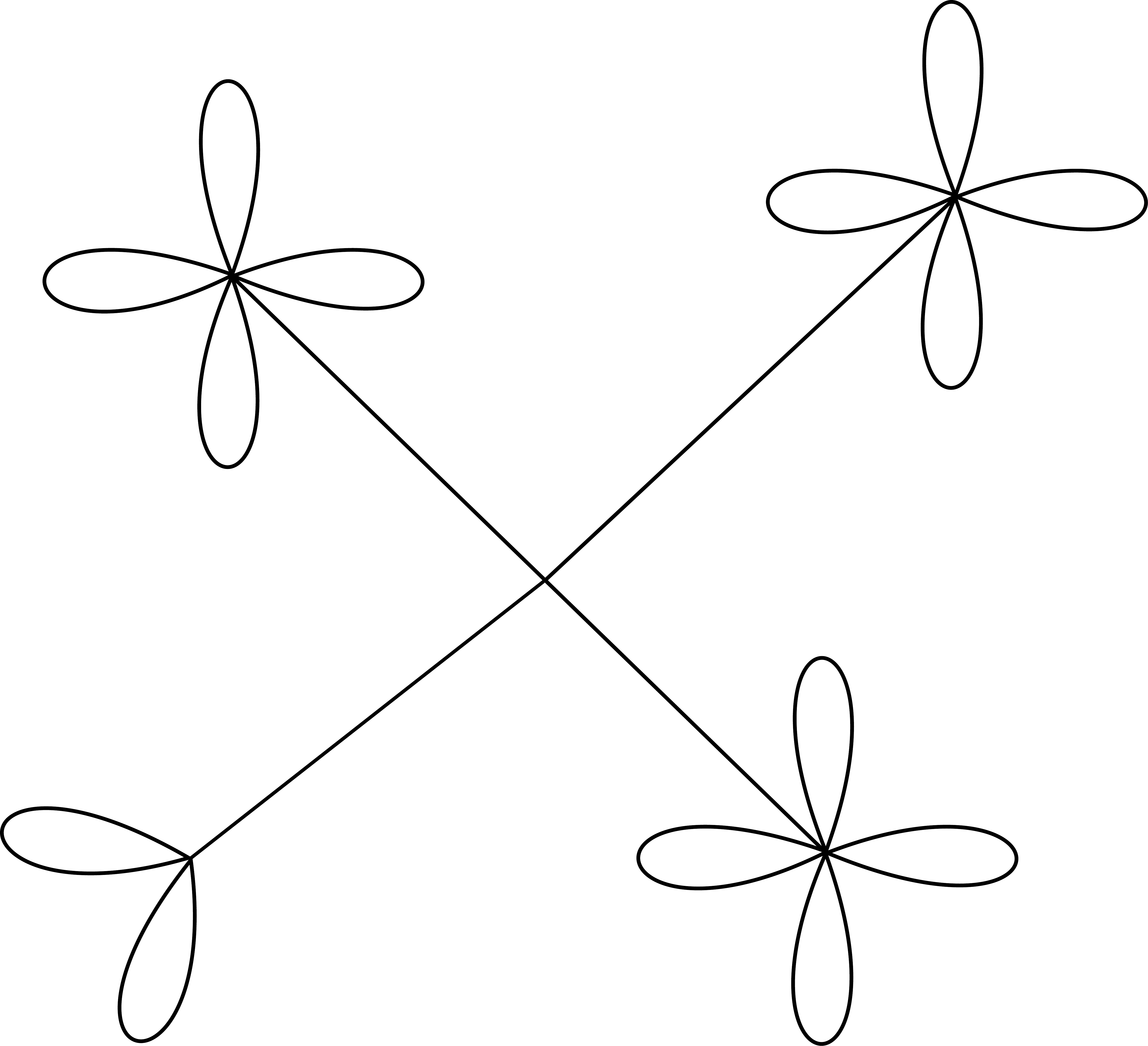}
\leavevmode
\put(2,113){\makebox(0,0){$X_i$}}
\put(-53,55){\makebox(0,60){$w_i$}}
\put(-74,64){\makebox(0,0){$v$}}
\put(-82.5,62){\makebox(-30,20){$L$}}
\put(-82,74){\makebox(-40,70){$l$}}
\caption{The graph~$\Gamma_\star$ for~$m=12$,~$p=4$,~$q=3$ and~$r=2$.}
\end{center}
\end{figure}

Our next theorem shows that one cannot substantially
improve Theorem~\ref{maintheorem2.bis}, thus it is roughly optimal.

\begin{theorem}\label{maintheorem2.optimal}
Let~$b$ and~$n$ be two integers such that~$b\geq 2$ and~$1\leq n\leq b$.
Let~$\lambda>0$. There exists a connected metric
graph of first Betti number~$b$, of length normalized to~$b$,
such that there are at most~$\floor{\lambda(\log(b)+n)}+1$
homologically independent loops in~$\Gamma$ based at the same point
of length at most~$\lambda(\log(b)+n)$
\end{theorem}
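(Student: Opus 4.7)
The plan is to realize the claimed upper bound via the explicit family $\Gamma_\star$ constructed just before the statement, specialized to suitably chosen parameters $m, p, L, l$. Write $K := \lambda(\log(b) + n)$ for convenience.

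First I dispatch the degenerate regime $K \geq b$. Any connected graph of first Betti number $b$ admits at most $b$ homologically independent loops, so the same bound holds for loops based at any common point. Since $b$ is an integer and $b \leq K$, we have $b \leq \floor{K} + 1$, and any connected metric graph of first Betti number $b$ and length $b$ (for instance the bouquet of $b$ circles of length $1$) trivially satisfies the conclusion.

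In the main regime $K < b$, I apply the $\Gamma_\star$ construction with $m := b$, $L := K/2$, $p := \floor{K/2} + 1$, and $l := p - L$. Then $l > 0$ by construction, and $p \leq K/2 + 1 < b/2 + 1 \leq b$, so the required hypothesis $m \geq p$ is satisfied. Writing $m = pq + r$ with $0 \leq r \leq p - 1$, the identity $L + l = p$ gives the total length $q(L+l) + r = qp + r = m = b$, so $\Gamma_\star$ is connected, of first Betti number $b$, and of length exactly $b$. By the counting claim already established for $\Gamma_\star$, every base point of $\Gamma_\star$ carries at most $p + r$ homologically independent loops of length at most $2L = K$.

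It remains to bound $p + r$. Since $r \leq p - 1$ and $2\floor{K/2}$ is an integer not exceeding $K$,
$$p + r \leq 2p - 1 = 2\floor{K/2} + 1 \leq \floor{K} + 1 = \floor{\lambda(\log(b) + n)} + 1,$$
which is the required upper bound. The argument is short because all the real content is packaged into the construction of $\Gamma_\star$ and the already-established bound on the number of short homologically independent loops at any point; the only remaining task is parameter matching, and the main (mild) subtlety is the case split at $K = b$, forced by the need to have $l > 0$ and $p \leq b$ simultaneously.
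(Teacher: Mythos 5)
Your proof is correct and takes essentially the same route as the paper: it instantiates the $\Gamma_\star$ construction with $m=b$, $p=\floor{K/2}+1$, $L=K/2$ and $l=p-L$ (the paper's $\varepsilon$), then concludes from the pre-established counting claim via $p+r\leq 2p-1\leq\floor{K}+1$. The only difference is that you spell out the trivial regime $K\geq b$ and the arithmetic verifications ($m\geq p$, total length $=b$) that the paper leaves implicit.
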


\begin{proof}
We only need to consider the case
when~$b\geq \floor{\lambda(\log(b)+n)}+1$ since the other case is trivial.
Denote by~$q$ and~$r$  respectively the quotient and the remainder
in the division of~$b$ by~$\floor{\frac{\lambda}{2}(\log(b)+n)}+1$.
Let~$\varepsilon>0$ be such that
$$\floor{\frac{\lambda}{2}(\log(b)+n)}+1= \frac{\lambda}{2}(\log(b)+n)+\varepsilon.$$
\noindent Consider  the graph~$\Gamma_\star$ given by the previous construction
with
\begin{itemize}
\item $m=b$,
\item $p=\floor{\frac{\lambda}{2}(\log(b)+n)}+1$,
\item $L=\frac{\lambda}{2}(\log(b)+n)$,
\item $l=\varepsilon$.
\end{itemize}
The graph~$\Gamma_\star$ is connected,
of first Betti number~$b$, of length~$b$ and has at
\linebreak
most~$\floor{\lambda(\log(b)+n)}+1$ homologically independent loops
based at the same point  of length at most~$\lambda(\log(b)+n)$.
\end{proof}

\section{Short homologically independent loops on surfaces with homotopical systole bounded from below.}\label{section.proof.maintheorem1.with.lower.bound}

In this section we combine ideas from \cite{BPS} and \cite{Ka} to
extend Theorem~\ref{maintheorem2.bis} 
to closed surfaces with systole bounded below.

\begin{definition}
Let~$(M,h)$ be a  closed Riemannian surface of genus~$g$.
The image in~$M$ of an abstract graph by an
embedding will be referred to as a graph in~$M$.
The metric~$h$ on~$M$ naturally induces a metric
on a graph~$\Gamma$ in~$M$. Despite the risk of confusion,
we will also denote by~$h$ such a metric on~$\Gamma$.
\end{definition}

\begin{proposition}\label{lengthofminimalgraph}
Let~$(M,h)$ be a closed  Riemannian surface of genus~$g~\geq~1$.
Suppose that the homotopical systole of~$M$ is at least~$\ell$.
Then, there exists a graph~$\Gamma$ in~$M$ such that
\begin{enumerate}
\item the inclusion map~$i:\Gamma\to M$ is distance non-increasing;
\item the homomorphism~$i_{\ast}:H_1(\Gamma,\R) \to H_1(M,\R)$
induced by the inclusion is an isomorphism;
\item $$\length(\Gamma)\leq \frac{2^9 \area(M,h)+g}{\min\{1,\ell\}}.$$
\end{enumerate}
\end{proposition}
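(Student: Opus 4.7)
The plan is to realise $\Gamma$ as the $1$-skeleton of a CW decomposition of $M$ having exactly one $2$-cell, obtained from a Voronoi-type decomposition associated to a well-chosen net. Set $\varepsilon := \min\{1,\ell\}/c$ for a sufficiently large universal constant $c$. Since the homotopical systole is at least $\ell$, the injectivity radius at every point of $M$ is at least $\ell/2$, so balls of radius a small multiple of $\varepsilon$ embed as essentially Euclidean disks. Choose a maximal $\varepsilon$-separated net $\{x_1,\dots,x_N\}$ in $M$; disjointness of the balls $B(x_i,\varepsilon/2)$ yields $N \leq C_1\,\area(M)/\varepsilon^2$ for some universal constant $C_1$.

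Next, I would form the Voronoi cells $V_i := \{x \in M : d(x,x_i)\leq d(x,x_j)\text{ for every }j\}$, each contained in $B(x_i,\varepsilon)$, and let $\Gamma_0 := \bigcup_i \partial V_i$. After an arbitrarily small generic perturbation of the net to prevent four-fold equidistance, $\Gamma_0$ is an embedded graph in $M$ and $(M,\Gamma_0)$ is a bona fide CW decomposition with $F=N$ disk-like, star-shaped $2$-cells and trivalent vertices. Each edge of $\Gamma_0$ sits in $\overline{B(x_i,\varepsilon)}\cup \overline{B(x_j,\varepsilon)}$ for neighbouring $x_i,x_j$, hence has length at most a small multiple of $\varepsilon$. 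Combining the Euler identity $V - E + N = 2 - 2g$ with trivalence $2E = 3V$ gives $E = 3N + 6g - 6$, and so
\[
\length(\Gamma_0) \,\leq\, C_2 \varepsilon\,E \,\leq\, C_3\varepsilon N + C_4\varepsilon g \,\leq\, \frac{C_5\,\area(M)}{\varepsilon}+ C_4 \varepsilon g.
\]
With the constant $c$ chosen appropriately, the right-hand side is dominated by $(2^9\,\area(M)+g)/\min\{1,\ell\}$.

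To enforce property~(2), I would then pick a spanning tree $T$ of the dual graph $\Gamma_0^{\ast}$ and let $\Gamma$ be obtained from $\Gamma_0$ by removing the $N-1$ primal edges dual to the edges of $T$. Each such edge borders two distinct faces, so the resulting CW structure on $M$ has exactly one $2$-cell; connectedness of $\Gamma$ follows from primal-dual duality on a closed surface (what remains in the primal graph after removing the dual of a forest still contains a spanning tree of $\Gamma_0$). The one-face Euler relation then gives $b_1(\Gamma)=2g=b_1(M)$, and since $M$ is obtained from $\Gamma$ by attaching a single $2$-cell, cellular homology yields a surjection $H_1(\Gamma,\R)\twoheadrightarrow H_1(M,\R)$ between real vector spaces of equal dimension, which is automatically an isomorphism. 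Property~(1) is immediate from endowing $\Gamma$ with the length-metric restriction of $h$, and (3) survives the trimming since $\length(\Gamma)\leq \length(\Gamma_0)$.

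The main technical obstacle is the first step: verifying that, under the systolic hypothesis and for a generic net, the Voronoi construction genuinely produces an embedded graph whose cells are topological disks meeting only along shared arcs, so that $\Gamma_0$ is an honest embedded graph in $M$. This is exactly where the choice $\varepsilon \ll \ell$ is essential — it transfers the Euclidean convexity of small geodesic balls back to $M$ via the exponential map, making the Voronoi cells behave as in the plane.
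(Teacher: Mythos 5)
Your topological bookkeeping at the end (pruning the primal edges dual to a spanning tree of the dual graph to get a one-face CW structure, hence $b_1(\Gamma)=2g$ and an isomorphism on $H_1$) is fine, but the metric heart of the argument has a genuine gap. A lower bound on the homotopical systole does \emph{not} bound the injectivity radius from below (conjugate points can occur at arbitrarily small scale when the curvature is large), and it gives no curvature control at all; so balls of radius $\varepsilon\ll\ell$ need not be ``essentially Euclidean'', and the exponential-map argument you invoke at the end is not available. What survives of your first step is only that balls of radius $<\ell/2$ are null-homotopic in $M$ and satisfy $\area(B(x,r))\geq cr^2$, which does give $N\lesssim \area(M)/\varepsilon^2$. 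The fatal step is the claim that each Voronoi edge has length $O(\varepsilon)$: being contained in $\overline{B(x_i,\varepsilon)}\cup\overline{B(x_j,\varepsilon)}$ bounds its \emph{diameter}, not its \emph{length}. A conformal perturbation supported in a region of tiny diameter can make the equidistant sets (and hence the Voronoi skeleton) arbitrarily long while leaving the systole and the area essentially unchanged, so no estimate of the form $\length(\Gamma_0)\leq C\varepsilon E$ holds in this generality. This is precisely why the constructions in the literature avoid Voronoi boundaries: one applies the coarea formula to $d(\cdot,x_i)$ to select radii $r_i\in(r_0,2r_0)$ with $\length(\partial B(x_i,r_i))\leq \area(B(x_i,2r_0))/r_0$, builds the graph from these circles together with connecting arcs, and captures $\pi_1$ (hence $H_1$) using the fact that the balls of the covering are contractible in $M$; summing over the bounded-multiplicity cover is what produces the $\area(M)/\ell$ term in~(3).

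Note also that the paper does not reprove the statement: it invokes Proposition~6.1 of \cite{Ka} verbatim, changing only the radius $r_0$ from $\tfrac{1}{2^5}$ to $\tfrac{\ell}{2^4}$, and that proof follows the coarea route just described. Finally, even granting your edge-length bound, your constants clash: you need $c$ large for the cells to behave but small enough that $cC_5\leq 2^9$. That mismatch is repairable; the Voronoi length estimate is not.
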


\begin{proof}
Without loss of generality, we suppose that~$\ell \leq 1$.
This proposition is the same as Proposition~$6.1$
in \cite{Ka}, where~$\ell$ was taken to be~$\frac{1}{2}$ and the area
is equal to~$\frac{1}{2^{11}}(2g-1)$ instead of~$g$.
The proof of Proposition 6.1 in \cite{Ka} starts by fixing~$r_0=\frac{1}{2^5}$,
In our case we fix~$r_0=\frac{\ell}{2^4}$ and reproduce the argument.
\end{proof}

Before stating out next theorem, let us recall the following theorem.

\begin{theorem}[\cite{jaco}]\label{jaco}
Let~$M$ be a closed Riemann surface of Euler characteristic~$\chi(M)\leq 0$.
Any subgroup of~$\pi_1(M)$ generated by~$k$ elements, where~$k<2-\chi(M)$,
is a free group.
\end{theorem}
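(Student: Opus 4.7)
The plan is to argue by a dichotomy on the index $[\pi_1(M):H]$: I would show that any infinite-index subgroup of $\pi_1(M)$ is automatically free, while any finite-index subgroup requires at least $2-\chi(M)$ generators. The hypothesis $k<2-\chi(M)$ then forces $H$ to have infinite index, and hence to be free.

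For the infinite-index step, I would consider the covering space $p : \widetilde{M}_H \to M$ associated with $H$. Because the index is infinite, $\widetilde{M}_H$ is a non-compact surface without boundary. I would then invoke the classical fact that any non-compact surface without boundary deformation retracts onto a one-dimensional CW complex; the cleanest justification is via a proper Morse function $f : \widetilde{M}_H \to [0,\infty)$, which has no local maxima since $f \to \infty$, so its handle decomposition involves only $0$- and $1$-handles and thus collapses onto its $1$-skeleton. It follows that $H \cong \pi_1(\widetilde{M}_H)$ is a free group.

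For the finite-index step, suppose $[\pi_1(M):H] = n < \infty$. Then $\widetilde{M}_H$ is a closed surface with $\chi(\widetilde{M}_H) = n\,\chi(M)$ by the multiplicativity of the Euler characteristic under finite covers. A direct computation of the abelianization of a closed surface group (of $\Z$-rank $2g$ in the orientable case of genus $g$, and of the form $\Z^{g-1} \oplus \Z/2$ in the non-orientable case of genus $g$) shows that the minimum number of generators of $\pi_1(\Sigma)$ equals $2-\chi(\Sigma)$ whenever $\chi(\Sigma) \le 0$. Since $\chi(M) \le 0$ and $n \ge 1$, one obtains $\operatorname{rank}(H) \ge 2 - n\chi(M) \ge 2 - \chi(M) > k$, contradicting that $H$ is generated by $k$ elements.

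The main obstacle is the infinite-index step, which rests on the nontrivial topological input that a non-compact $2$-manifold without boundary is homotopy equivalent to a graph. Once that is granted, the finite-index step is essentially a rank computation via the multiplicativity of the Euler characteristic, and combining the two observations yields the theorem immediately.
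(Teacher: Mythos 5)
The paper does not prove this statement at all: it is quoted from Jaco's 1970 note and used as a black box (in Theorem~\ref{maintheorem1.with.lower.bound} and implicitly in the main construction), so there is no in-paper argument to compare against. Your index dichotomy is the standard proof of Jaco's theorem and it is correct: a finitely generated infinite-index subgroup corresponds to a non-compact covering surface without boundary, whose fundamental group is free; a finite-index subgroup of index $n$ is the fundamental group of a closed surface of Euler characteristic $n\chi(M)\le\chi(M)\le 0$, and your abelianization computation (rank $2g$ in the orientable case, $\Z^{g-1}\oplus\Z/2$ in the non-orientable case) correctly shows its minimal number of generators is $2-n\chi(M)\ge 2-\chi(M)>k$, which rules this case out. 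The one imprecision is in your justification of the infinite-index step: properness of $f$ with $f\to\infty$ does \emph{not} by itself preclude local maxima (index-$2$ critical points) --- think of a bump on an infinite cylinder. To conclude that only $0$- and $1$-handles are needed one must cancel or slide the $2$-handles off to infinity, or simply invoke the classical theorem that an open surface is homotopy equivalent to a $1$-complex (equivalently, that the fundamental group of a non-compact surface is free). Since you explicitly identify this as the nontrivial topological input and it is a standard cited fact, this is a presentational rather than a mathematical gap.
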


\noindent Now we can prove the following result.

\begin{theorem}\label{maintheorem1.with.lower.bound}
Let~$M$ be a closed orientable Riemannian surface
of genus~$g\geq~1$ with homotopical systole
at least~$\ell$ and area normalized to~$g$.
Let~$n\in \{1,\ldots,2g \}$ be an integer.
There exist at least~$n$ homologically independent
loops~$\gamma_1,...,\gamma_n$
based at the same point in~$M$ such that
for every~$i=1,\ldots,n$, we have
$$\length(\gamma_i)\leq 24C_{\ell}(\log(2g)+n),$$
where~$C_{\ell}=\frac{2^{9}}{\min\{1,\ell\}}$.
\\
Moreover, if~$n<2g$ then~$\langle \gamma_1,...,\gamma_n \rangle$ is free of rank~$n$.
\end{theorem}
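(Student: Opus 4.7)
The plan is to reduce the surface problem to the graph case already handled in Theorem~\ref{maintheorem2.bis}, using the skeleton supplied by Proposition~\ref{lengthofminimalgraph} and then upgrading homological independence to the free-group conclusion via Theorem~\ref{jaco}.

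First, I would apply Proposition~\ref{lengthofminimalgraph} to obtain an embedded graph $\Gamma\subset M$ such that the inclusion $i\colon\Gamma\hookrightarrow M$ is distance non-increasing, the induced map $i_{\ast}\colon H_1(\Gamma,\R)\to H_1(M,\R)$ is an isomorphism, and
\[
\length(\Gamma)\le \frac{2^{9}\area(M,h)+g}{\min\{1,\ell\}}=\frac{(2^{9}+1)\,g}{\min\{1,\ell\}},
\]
because the area is normalized to $g$. The isomorphism on $H_1$ forces $b_1(\Gamma)=\dim H_1(M,\R)=2g$, and the length bound gives $\length(\Gamma)/2g\le(2^{9}+1)/(2\min\{1,\ell\})\le C_\ell$.

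Next, I would apply Corollary~\ref{corollary.maintheorem2.bis} to $\Gamma$ with the prescribed integer $n\in\{1,\dots,2g\}$, producing $n$ homologically independent loops $\gamma_1,\dots,\gamma_n$ in $\Gamma$ based at a common vertex $v$, each of length at most
\[
24(\log(2g)+n)\,\frac{\length(\Gamma)}{2g}\le 24\,C_\ell(\log(2g)+n).
\]
Since the metric on $\Gamma$ is the one induced from $h$, the intrinsic length of each $\gamma_i$ in $\Gamma$ equals its length as a loop in $M$; the bound therefore transfers verbatim. Because $i_{\ast}$ is an isomorphism, the loops $\gamma_1,\dots,\gamma_n$ remain homologically independent in $M$.

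Finally, suppose $n<2g$. Since $M$ is closed, orientable of genus $g\ge 1$, we have $\chi(M)=2-2g$ and thus $n<2-\chi(M)$. Theorem~\ref{jaco} then implies that the subgroup $H:=\langle\gamma_1,\dots,\gamma_n\rangle\le\pi_1(M,v)$ is free of some rank $r\le n$. But $\gamma_1,\dots,\gamma_n$ map to $n$ linearly independent classes in $H_1(M,\R)$, so the abelianization of $H$ has rank at least $n$, forcing $r=n$. Hence $H$ is free of rank $n$ and the $\gamma_i$'s are homotopically independent, as required.

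The only delicate point is the arithmetic in the first step, namely verifying that the constant $(2^{9}+1)/2$ stays below $C_\ell\min\{1,\ell\}=2^{9}$; beyond that, the argument is a direct transfer of the graph statement through the skeleton $\Gamma$, together with a one-line application of Jaco's theorem to promote homological to homotopical independence.
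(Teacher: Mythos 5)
Your proposal is correct and follows essentially the same route as the paper: extract the graph $\Gamma$ from Proposition~\ref{lengthofminimalgraph}, apply Corollary~\ref{corollary.maintheorem2.bis} with first Betti number $2g$, push the loops into $M$ via the distance non-increasing inclusion, and invoke Theorem~\ref{jaco} for the free-rank statement. Your explicit check that $(2^{9}+1)/2\le 2^{9}$ and your observation that homological independence forces the free rank to be exactly $n$ are details the paper leaves implicit, but the argument is the same.
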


\begin{proof}
Let~$\Gamma$ be a graph in~$M$ that satisfies
(1), (2) and (3) of Proposition \ref{lengthofminimalgraph}.
The first Betti number of~$\Gamma$ is~$2g$.
By Corollary \ref{corollary.maintheorem2.bis},
there are at least~$n$ homologically independent loops in~$\Gamma$ based
at the same vertex of length at most~$24C_{\ell}(\log(2g)+n)$.
The images of these loops by the inclusion map~$i$ yield the desired loops.
The second assumption follows from Theorem~\ref{jaco}.
\end{proof}

\medskip

\begin{remark}\label{remark.non.orientable}
A non-orientable version of Theorem  \ref{maintheorem1.with.lower.bound}
holds. Let~$M$ be a closed non-orientable surface of genus~$g\geq 1$
with homotopical systole at least~$\ell$ and area normalized to~$g$.
Let~$n\in \{1,\ldots,g\}$.
There are at least~$n$
loops~$\gamma_1,...\gamma_n$
based at the same point~$v$ in~$M$
whose homology classes in~$H_1(M,\Z_2)$ are independent  such that
for every~$i=1,...,n$, we have
$$\length(\gamma_i)\leq 24C'_{\ell}(\log(g)+n),$$
where~$C'_{\ell}=\frac{C}{\min\{1,\ell\}}$
for some positive constant~$C$.
Moreover, if~$n<g$ then~$\langle \gamma_1,...,\gamma_n \rangle$ is free of rank~$n$.
\end{remark}

\section{Cut loci and capturing the topology}\label{section.cut.loci}
In this section we extend the notion of cut locus defined
originally for points in a Riemannian manifold to simple closed
geodesics (this might be already defined but the author
didn't find a reference in the literature)
and we give some basic results for the new notion.
\\

Let~$M$ be a closed surface and~$p$ be a point in~$M$.
The cut point of~$p$ along a geodesic~$C_p$ starting at~$p$
is the first point~$q\in C_p$ such that the arc of~$C_p$
between~$p$ and any point~$r$
on~$C_p$ after~$q$ is no longer minimizing.
The set~$\Cut(p)$ of all cut points along all
the geodesics issued from~$p$ is called the cut locus of~$p$.
We extend this notion to simple closed geodesics as follows.
\\

Let~$\alpha:[0,l] \to M$ be a simple closed geodesic in~$M$
and~$\beta$ be another geodesic that starts
orthogonally from~$\alpha$ at some point~$p$.
The cut point of~$\alpha$
along~$\beta$ is the first point~$q\in \beta$ such
that, for any point~$r$ on~$\beta$ beyond~$q$ the length
of the arc of~$\beta$ between~$p$ and~$r$
no longer agrees with the distance from~$r$ to~$\alpha$.
The set~$\Cut(\alpha)$ of all the cut points of all the geodesics
issued orthogonally from~$\alpha$ is called the cut locus of~$\alpha$.
An alternative useful way to view~$\Cut(\alpha)$ is the
following. Denote by~$N\alpha$ the normal bundle to~$\alpha$.
Each vector~$v_t\in N\alpha$ gives
rise to a geodesic~$C_t$ starting at~$\alpha(t)$
such that~${C_t}'(0)=v_t$. Denote by~$q_t$
the cut point of~$\alpha$ along the geodesic~$C_t$.
The point~$q_t$ is the image by the exponential
map of some vector~$v'_t$ parallel to~$v_t$.
Let~$N_1$ be the set of the vectors~$v'_t$
and~$N_2$ be the set of the vectors~$\lambda v'_t$,
where~$\lambda\in [0,1)$.
Then,~$\Cut(\alpha)=\expo(N_1)$.

\begin{lemma}
$$M=\exp(N_1)\cup \exp(N_2),$$
where the union is disjoint.
\end{lemma}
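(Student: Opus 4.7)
The plan is to prove both inclusions of the set equality separately, using only compactness of $\alpha$, the first variation formula, and the definition of the cut point of $\alpha$ introduced just above.

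For the covering $M \subseteq \exp(N_1) \cup \exp(N_2)$, I fix any $p \in M$: compactness of $\alpha$ yields a nearest point $\alpha(s) \in \alpha$, and a minimizing curve from $\alpha(s)$ to $p$ is a geodesic that must leave $\alpha$ orthogonally at $\alpha(s)$ (first variation along $\alpha$ forbids an oblique angle at the endpoint). Writing $v_s$ for the corresponding unit normal gives $p = C_s(d(p,\alpha))$, and because the segment $C_s|_{[0,d(p,\alpha)]}$ already realizes $d(p,\alpha)$, the definition of the cut point forces $d(p,\alpha) \leq |v'_s|$. The equality case puts $p$ in $\exp(N_1)$, while the strict case gives $p = \exp(\lambda v'_s) \in \exp(N_2)$ with $\lambda = d(p,\alpha)/|v'_s| \in [0,1)$, the value $\lambda = 0$ corresponding exactly to $p \in \alpha$.

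For the disjointness I argue by contradiction. Suppose a point $p$ is simultaneously $\exp(v'_t)$, a cut point along $C_t$, and $C_s(\lambda|v'_s|)$ for some $\lambda \in [0,1)$. Applying the cut point property to both geodesics yields $d(p,\alpha) = |v'_t|$ and $d(p,\alpha) = \lambda|v'_s|$, hence $|v'_t| = \lambda|v'_s|$ (in particular $\lambda > 0$, since the cut distance along $C_t$ is positive). I then pick $\varepsilon > 0$ small enough that $q := C_s(\lambda|v'_s|+\varepsilon)$ remains before the cut along $C_s$, so that $d(q,\alpha) = \lambda|v'_s|+\varepsilon$, and compare with the concatenation of $C_t|_{[0,|v'_t|]}$ followed by $C_s|_{[\lambda|v'_s|,\lambda|v'_s|+\varepsilon]}$, a curve from $\alpha(t)$ to $q$ of length $|v'_t|+\varepsilon = d(q,\alpha)$. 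A dichotomy then closes the argument: if the two arcs meet at $p$ with different tangent vectors, rounding the corner produces a strictly shorter curve from $\alpha$ to $q$ and contradicts the value of $d(q,\alpha)$; if they meet smoothly, then $C_t$ extended past $p$ coincides with $C_s$ and continues to realize the distance to $\alpha$, contradicting the fact that $p$ is the cut point of $\alpha$ along $C_t$.

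I expect the main obstacle to be precisely this dichotomy step. Once the two expressions for $d(p,\alpha)$ force the concatenation to equal $d(q,\alpha)$ in length, the strict inequality needed for the contradiction must come either from the local corner-smoothing argument (when the tangents at $p$ disagree) or from invoking the cut point property itself (when they agree); everything else in the proof reduces to the cut point definition, compactness of $\alpha$, and the first variation formula.
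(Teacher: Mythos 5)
Your proof is correct and follows essentially the same route as the paper: a minimizing normal geodesic places every point at or before a cut point for the covering, and disjointness comes from concatenating the two competing geodesics past the common point and deriving a contradiction from the length of the resulting curve. In fact your explicit dichotomy at $p$ (corner-smoothing when the tangents differ, uniqueness of geodesics when they agree) is slightly more careful than the paper, which simply asserts that the concatenated curve is not smooth at the junction without addressing the case where the two geodesics arrive with the same tangent.
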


\begin{proof}
Let~$x$ be a point in~$M$.
There exists a minimizing geodesic~${\sigma_x}^{-1}$
from~$x$ to~$\alpha$ parametrized by arc length
such that~$\length({\sigma_x}^{-1})=\dist(x,\alpha)$.
The geodesic~${\sigma_x}^{-1}$ hits~$\alpha$ orthogonaly
in a point~$\alpha(t)$ ($\textit{cf.}$ \cite{Docarmo}).
Since~$\sigma_x$ is  minimizing,
the point~$x$ is not after the cut point
of~$\alpha$ along~$\sigma_x$. That means that the
vector~$\dist(\alpha(t),x)\sigma'(0)\in N_1\cup N_2$.
Notice that~$x=~\exp(\dist(\alpha(t),x)\sigma'(0))$.
Thus,~$M=\exp(N_1)\cup \exp(N_2)$.
\\
Now let us prove that the union is disjoint.
Let~$y\in \exp(N_1) \cap \exp(N_2)$. Since~$y \in \exp(N_2)$,
there exists a minimizing geodesic~$\sigma_y:[0,\ell]\to M$
from~$\alpha$ to~$y$, parametrized by arc length such that
$\sigma_y$ is still minimizing for some time after~$y$ i.e.
there exists an~$\varepsilon >0$
such that~$\sigma_y: [0,\ell+\varepsilon]$ is a minimizing geodesic
from~$\alpha$ to~$\sigma_y(\ell+\varepsilon)$.
On the other hand,
since~$y\in\exp(N_1)$, there exists a minimizing geodesic~$\delta_y$
from~$\alpha$ to~$y$ parametrized by arc length such
that~$\delta_y$ is no longer minimizing after~$y$.
Let~$\phi$ be the curve defined by~$\phi(t)=\delta_y(t)$ if~$t\in [0,\ell]$,
and~$\phi(t)=\sigma_y(t)$ for~$t\in [\ell,\ell+\varepsilon]$.
Let~$0<\varepsilon'<\varepsilon$. There exists a minimizing
geodesic from~$\phi(\ell-\varepsilon')$ to~$\phi(\ell+\varepsilon')$
which is of length strictly less than the arc of~$\phi$ between these two
points since~$\phi$ is not smooth at~$\phi(\ell)$.
We conclude that~$\dist(\sigma_y(\ell+\varepsilon'),\alpha)$ is strictly less than
the length of~$\sigma_y$ between~$\sigma_y(0)$ and~$\sigma_y(\ell+\varepsilon')$.
Hence a contradiction. So the proof is finished.
\end{proof}

\begin{lemma}\label{cut.locus.deformation}
The set~$\Cut(\alpha)$ is a deformation retract of~$M\setminus{\{\alpha\}}$.
We will say that~$\Cut(\alpha)$ captures the topology of~$M\setminus{\{\alpha \}}$.
\end{lemma}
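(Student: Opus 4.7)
My plan is to construct an explicit strong deformation retract of $M \setminus \alpha$ onto $\Cut(\alpha)$ by pushing each point away from $\alpha$ along its minimizing geodesic from $\alpha$, extended up to the cut point. By the previous lemma we have the disjoint decomposition $M = \exp(N_1) \sqcup \exp(N_2)$ with $\Cut(\alpha) = \exp(N_1)$ and $\alpha \subset \exp(N_2)$, so $M \setminus \alpha = \Cut(\alpha) \sqcup (\exp(N_2) \setminus \alpha)$. For every $x \in \exp(N_2) \setminus \alpha$ the minimizing geodesic from $\alpha$ to $x$ is unique (its cut point lies strictly beyond $x$ along the geodesic), which yields continuous maps $\tau(x) = \dist(x,\alpha) > 0$ and $u(x)$ in the compact unit normal bundle $SN\alpha$ such that $x = \exp(\tau(x)\, u(x))$. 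Denote by $c\colon SN\alpha \to (0, +\infty)$ the cut distance function; it is continuous on $SN\alpha$ (a classical fact for smooth compact submanifolds) and satisfies $c(u(x)) \geq \tau(x)$ with equality iff $x \in \Cut(\alpha)$.

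Using these data, I define $H \colon (M \setminus \alpha) \times [0,1] \to M \setminus \alpha$ by
\[
H(x, s) = \begin{cases} \exp\!\bigl(\bigl((1-s)\,\tau(x) + s\, c(u(x))\bigr)\, u(x)\bigr) & \text{if } x \in \exp(N_2) \setminus \alpha, \\ x & \text{if } x \in \Cut(\alpha). \end{cases}
\]
By direct inspection $H(\cdot,0) = \mathrm{id}$; $H(\cdot,1)$ lands in $\Cut(\alpha)$ since $c(u(x))\, u(x)$ belongs to $N_1$ by definition of $c$; $H(x, s) = x$ for every $x \in \Cut(\alpha)$ and every $s$; and the parameter $(1-s)\tau(x) + s\, c(u(x))$ is always $\geq \tau(x) > 0$, so $H$ indeed takes values in $M \setminus \alpha$.

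The only delicate point is continuity of $H$ at a point $x_0 \in \Cut(\alpha)$ that is reached by several minimizing geodesics from $\alpha$, since there $u$ need not extend continuously. Given a sequence $x_n \to x_0$ with $x_n \in \exp(N_2) \setminus \alpha$, compactness of $SN\alpha$ lets me pass to a subsequence with $u(x_n) \to u^\ast$, while $\tau(x_n) \to \tau(x_0)$. Continuity of $\exp$ yields $\exp(\tau(x_0)\, u^\ast) = x_0$, so $u^\ast$ is itself a minimizing direction to $x_0$ and hence $c(u^\ast) = \tau(x_0)$. Using continuity of $c$ at $u^\ast$, we get $c(u(x_n)) \to \tau(x_0)$, so
\[
H(x_n, s) \longrightarrow \exp\!\bigl(\bigl((1-s)\tau(x_0) + s\, c(u^\ast)\bigr)\, u^\ast\bigr) = \exp(\tau(x_0)\, u^\ast) = x_0 = H(x_0, s),
\]
and the limit is independent of the extracted subsequence, so $H$ is continuous at $(x_0, s)$. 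Continuity on $(\exp(N_2) \setminus \alpha) \times [0,1]$ follows at once from the smoothness of $\exp$ and the continuity of $\tau$, $u$ and $c$, completing the construction. The hardest step is this boundary continuity argument, since it is precisely at cut points that the foot map $u$ becomes multi-valued and one must use the compactness of $SN\alpha$ together with the continuity of the cut distance function.
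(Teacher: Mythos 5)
Your proof is correct and follows the same idea as the paper's: retract each point of $M\setminus\alpha$ along its unique minimizing normal geodesic out to the corresponding cut point. The paper states this in one sentence without verifying continuity, whereas you supply the explicit homotopy and the (genuinely needed) continuity check at cut points with several minimizing feet, using the disjointness $\exp(N_1)\sqcup\exp(N_2)$ from the preceding lemma; this is a welcome strengthening, not a different method.
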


\begin{proof}
Let~$x$ be a point of~$M$ not in~$\alpha$ or~$\Cut(\alpha)$.
Denote by~${\sigma_x}$ the
unique minimizing geodesic from~$x$ to~$\alpha$.
Let~$x'$ be the cut point of~$\alpha$ along the geodesic~$\sigma_x$.
Clearly,~$x'\in \Cut(\alpha)$.
Now we can shrink~$M\setminus{\{\alpha\}}$ to~$\Cut(\alpha)$
by sliding each point~$x$ of~$M$ not in~$\alpha$ or~$\Cut(\alpha)$
to~$\Cut(\alpha)$ along the arc of the geodesic~$\sigma_x$ between~$x$ and~$x'$.
\end{proof}

\begin{proposition}\label{propo.cut.graph}
Let~$(M,g)$ be a closed real analytic Riemannian
surface and~$\alpha$ be a simple closed geodesic in~$M$.
Then~$\Cut(\alpha)$ is a finite  graph.
\end{proposition}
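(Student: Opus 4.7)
The plan is to adapt the classical argument of Myers (1935), which shows that on a closed real analytic Riemannian surface the cut locus of a \emph{point} is a finite graph, to the present situation where the base set is a simple closed geodesic $\alpha$ instead of a point. I would use the parametrization of $\Cut(\alpha)$ given immediately before Lemma~5.2: let $N^1\alpha$ denote the unit normal bundle of $\alpha$, a compact real analytic $1$-manifold. Define the \emph{cut time function}
\[
U:N^1\alpha\longrightarrow \R_{>0},\qquad U(v)=\sup\bigl\{t>0 \,:\, s\mapsto \exp(sv)\ \text{minimizes the distance to }\alpha\ \text{on }[0,t]\bigr\}.
\]
Then $\Cut(\alpha)=\{\exp(U(v)\,v):v\in N^1\alpha\}$, so it suffices to show that $U$ is continuous and piecewise real analytic with finitely many break points on the compact $1$-manifold $N^1\alpha$.

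First I would verify continuity of $U$ by the same standard argument used for points (compactness of the space of minimizing normal geodesics plus upper semicontinuity of $U$ coming from the openness of the set of minimizing pairs, combined with a lower semicontinuity obtained from first variation). Next I would decompose $\Cut(\alpha)$ into two pieces: the \emph{conjugate part} $C$, consisting of first conjugate points along minimizing normal geodesics, and the \emph{equidistant part} $E$, consisting of endpoints of at least two distinct minimizing normal geodesics from $\alpha$. Since the metric is real analytic, the exponential map $\exp:N\alpha\to M$ is real analytic; the zero set of its Jacobian (the caustic) is thus a real analytic subvariety, and the first conjugate locus $C$ is the image of a semi-analytic subset under a real analytic map.

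For the equidistant part, I would work locally on $N^1\alpha\times N^1\alpha$: the set of pairs $(v,v')$ with $v\neq v'$ and $\exp(tv)=\exp(tv')$ for some common $t>0$ is cut out by real analytic equations in local charts, so by the \L ojasiewicz structure theorem for semi-analytic sets it has only finitely many connected components in a neighborhood of any compact subset. Compactness of $M$ and of $N^1\alpha$ then upgrades this to a global finite union of real analytic arcs and isolated points. The same argument bounds the number of analytic branches of $U$, since a break point of $U$ must correspond either to a point of $E$ (where two minimizing branches meet) or to a point of $C$ (where a minimizer becomes conjugate).

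Finally, by Lemma~\ref{cut.locus.deformation}, $\Cut(\alpha)$ is a deformation retract of $M\setminus\alpha$, so it has topological dimension at most one. Combining this with the fact that $\Cut(\alpha)$ is a finite union of real analytic arcs meeting at finitely many vertices (the branch points of $U$, together with the endpoints of the arcs in $E$ and $C$), we conclude that $\Cut(\alpha)$ carries the structure of a finite $1$-dimensional CW complex, i.e.\ a finite graph. The main obstacle is the second paragraph: showing that the equidistant locus is semi-analytic and controlling the conjugate part near the boundary of the minimizing region, for which the real analyticity hypothesis on the metric is essential, as the corresponding statement fails in the merely smooth category (Gluck--Singer).
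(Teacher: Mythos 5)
Your proposal is correct and takes essentially the same route as the paper, which omits the proof entirely and states only that it is ``essentially the same proof as in \cite{Myers} p.97''; your sketch is exactly that adaptation of Myers's argument (cut-time function on the unit normal bundle, conjugate/equidistant decomposition, finiteness from real analyticity). Nothing further is needed.
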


We omit the proof of Proposition \ref{propo.cut.graph}
since it is essentially the same proof
as in \cite{Myers} p.97.

\section{Short Homotopically Independent loops on Riemannian Surfaces}\label{section.proof.maintheorem1}
In this section we prove Theorem~\ref{maintheorem1}.
Before doing that, let us give some definitions  and  some 
independent propositions that will be useful to the rest of this section.

\begin{lemma}\label{lemma.free.rank}
Let~$F= \langle a,b \rangle$ be a free subgroup of
rank 2 of the fundamental group of a closed Riemannian manifold.
For every integer~$n\geq~1$,
the  subgroup~$H=\langle b,a^1ba^{-1},...,a^{n-1}ba^{-(n-1)} \rangle$
of~$F$ is  free  of rank~$n$. Moreover, if~$\length(a)=l_a$ and~$\length(b)=l_b$.
Then,
$$\sup\limits_{0\leq i \leq n-1}\length(a^iba^{-i})\leq 2(n-1)l_a+l_b.$$
\end{lemma}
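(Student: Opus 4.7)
The length bound is immediate from the triangle inequality for lengths of loops: concatenating the $i$ copies of $a$, the loop $b$, and the $i$ copies of $a^{-1}$ yields
\[
\length(a^i b a^{-i}) \leq 2 i\, l_a + l_b \leq 2(n-1) l_a + l_b
\]
for all $0 \leq i \leq n-1$, so I would dispatch this first.

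The content is the freeness claim: the subgroup $H = \langle b_0, b_1, \ldots, b_{n-1}\rangle$ with $b_i := a^i b a^{-i}$ is free of rank $n$ inside $F$. My plan is to argue directly by reducing to the normal form in $F = \langle a, b\rangle$, which is free of rank $2$ by hypothesis. Let
\[
w = b_{i_1}^{\epsilon_1} b_{i_2}^{\epsilon_2} \cdots b_{i_k}^{\epsilon_k}
\]
be any nontrivial reduced word in the alphabet $\{b_0^{\pm1}, \ldots, b_{n-1}^{\pm1}\}$ (so $\epsilon_j = \pm 1$ and whenever $i_j = i_{j+1}$ we have $\epsilon_j = \epsilon_{j+1}$). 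I would substitute $b_{i_j} = a^{i_j} b a^{-i_j}$ and telescope to obtain
\[
w = a^{i_1}\, b^{\epsilon_1}\, a^{i_2 - i_1}\, b^{\epsilon_2}\, a^{i_3 - i_2}\, \cdots \, a^{i_k - i_{k-1}}\, b^{\epsilon_k}\, a^{-i_k}
\]
as an element of $F$. It then suffices to verify that this expression is a (possibly not yet fully consolidated) reduced word in the generators $\{a, b\}$ of $F$, and that it is nonempty.

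For the verification step I would split on the two cases that arise between adjacent letters: (i) if $i_j \neq i_{j+1}$, the block $a^{i_{j+1} - i_j}$ is a nontrivial power of $a$ separating $b^{\epsilon_j}$ from $b^{\epsilon_{j+1}}$, contributing a genuine alternation between $a$-syllables and $b$-syllables; (ii) if $i_j = i_{j+1}$, then $a^{i_{j+1} - i_j}$ is trivial and the two adjacent $b$-syllables merge into $b^{\epsilon_j + \epsilon_{j+1}} = b^{\pm 2}$, which is still nontrivial by the reducedness of $w$. After performing these merges, the expression becomes an alternating product of nontrivial powers of $a$ and nontrivial powers of $b$, which is exactly the normal form of a nontrivial element of the free group $F$. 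Hence $w \neq 1$ in $F$, so the map from the free group on $n$ abstract generators to $H$ sending the $j$-th generator to $b_{j-1}$ is injective, proving that $H$ is free of rank $n$.

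The only slightly delicate point, and the one I would be most careful about, is handling the boundary and the collision case $i_j = i_{j+1}$: one must check that the assumed reducedness of $w$ as a word in the $b_i$'s forces $\epsilon_j = \epsilon_{j+1}$, without which the normal-form argument would fail. Apart from that subtlety, the argument is a standard normal-form computation in a rank-$2$ free group, so I do not expect any real obstruction.
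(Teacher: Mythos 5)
Your argument is correct, and it takes a different (and in fact more self-contained) route than the paper. The paper's proof first invokes Nielsen--Schreier to conclude that $H$ is free, and then argues that the rank is $n$ by observing that no generator $a^pba^{-p}$ lies in the subgroup generated by the remaining $a^qba^{-q}$, on the grounds that a reduced word in those elements ``starts with $a^q$ with $q\neq p$'' --- essentially an irredundancy claim for the generating set, left at the level of a one-line remark. Your proof instead establishes directly that $b_0,\dots,b_{n-1}$ form a free basis: you substitute $b_i=a^iba^{-i}$, telescope, and check that the resulting syllable decomposition in $F=\langle a,b\rangle$ is a nontrivial reduced word, with the only collision case $i_j=i_{j+1}$ handled by the reducedness hypothesis forcing $\epsilon_j=\epsilon_{j+1}$ so that adjacent $b$-syllables merge into a nontrivial power of $b$ rather than cancelling. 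This yields injectivity of the map from the abstract free group of rank $n$ onto $H$ in one stroke, so freeness and the exact rank come out simultaneously, whereas the paper's version relies on the (true but unstated) fact that an irredundant generating set of size $n$ of a free group forces rank $n$. Your telescoping identity and the length bound $2(n-1)l_a+l_b$ are both verified correctly; the only cosmetic point is that a maximal run of three or more equal consecutive indices merges into $b^{\pm m}$ with $m\ge 3$, which your argument covers since reducedness forces all signs in such a run to agree.
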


\begin{proof}
Since the subgroup of a free group is free then~$H$ is free.
Next, we claim that the generator~$a^pba^{-p}$ is not an element
of the free subgroup~$G$ generated by
the elements~$a^qba^{-q}$ for~$q\in \{0,\ldots, n-1\}\setminus \{p\}$.
Indeed, a reduced word in~$G$ starts with~$a^q$ with~$q\neq p$.
So~$H$ is of rank~$n$. The length inequality is immediate.
\end{proof}

\begin{proposition}\label{proposition.maximal.cylinder}
Let~$(M,g)$ be a compact Riemannian cylinder.
Denote by~$\alpha$ and~$\beta$ the two
boundary components of~$M$. Suppose that
$$\length(\alpha)<1<\length(\beta).$$
Then there exists a  non-contractible simple loop~$\gamma$ in~$M$
of length 1 such that the systole of the cylinder~$R_{\gamma}$
bounded by~$\beta$ and~$\gamma$ is equal to 1.
\\
\indent 
In particular, the loop $\gamma$ is a systolic loop of $R_{\gamma}$.
\end{proposition}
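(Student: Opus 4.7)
The plan is to identify~$\gamma$ as a maximal element---in a natural ``closeness to~$\beta$'' partial order---among all simple non-contractible loops of length at most~$1$ in~$M$.

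Let~$\mathcal{F}$ denote the collection of simple non-contractible loops~$\sigma \subset M$ with~$\length(\sigma) \le 1$. Each~$\sigma \in \mathcal{F}$ bounds, together with~$\beta$, a closed sub-cylinder~$R_\sigma \subset M$. Equip~$\mathcal{F}$ with the partial order~$\sigma \preceq \sigma'$ iff~$R_{\sigma'} \subseteq R_\sigma$, so that larger elements sit closer to~$\beta$. Since~$\alpha$ is itself a non-contractible loop of~$M$ of length~$<1$, one has~$\sys(M) < 1$, and a shortest non-contractible loop of~$M$ (necessarily simple) belongs to~$\mathcal{F}$; hence~$\mathcal{F} \ne \emptyset$.

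The main step is to produce a maximal element of~$(\mathcal{F}, \preceq)$ via compactness. Given a chain~$\{\sigma_n\} \subset \mathcal{F}$ with~$R_{\sigma_n}$ strictly decreasing, I parametrize each~$\sigma_n$ with constant speed on~$[0,1]$, so the~$\sigma_n$ form an equi-Lipschitz family. Arzel\`a--Ascoli produces a uniform subsequential limit~$\sigma^\ast$, a non-contractible closed curve of length~$\le 1$ contained in the compact region~$R^\ast = \bigcap_n R_{\sigma_n}$. Although~$\sigma^\ast$ may self-intersect, a standard Jordan-type surgery in the cylinder~$M$---cutting off contractible sub-loops at self-intersections---extracts a simple non-contractible sub-loop~$\gamma^\ast \subset \sigma^\ast$ of length~$\le 1$ still bounding a sub-cylinder~$R_{\gamma^\ast} \subseteq R^\ast$. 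This~$\gamma^\ast$ is an upper bound for the chain, so Zorn's lemma provides a maximal element~$\gamma \in \mathcal{F}$.

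Maximality then forces both equalities. If some non-contractible loop in~$R_\gamma$ had length strictly less than~$\length(\gamma)$, a shortest such loop would be a simple geodesic~$\delta$ in the interior of~$R_\gamma$ with~$\length(\delta) < \length(\gamma) \le 1$, placing~$\delta \in \mathcal{F}$ with~$\gamma \prec \delta$, a contradiction; hence~$\gamma$ is a systolic loop of~$R_\gamma$ and~$\sys(R_\gamma) = \length(\gamma)$. If in addition~$\length(\gamma) < 1$, pushing~$\gamma$ slightly into~$R_\gamma$ through a collar neighborhood yields a simple non-contractible loop~$\gamma'$ in the interior of~$R_\gamma$ with~$\length(\gamma') < 1$ (by continuity of length under small normal perturbations), again contradicting maximality. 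Therefore~$\length(\gamma) = 1$ and~$\sys(R_\gamma) = 1$, so~$\gamma$ is itself a systolic loop of~$R_\gamma$.

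The delicate step is the extraction of a simple non-contractible sub-loop from the possibly self-intersecting uniform limit~$\sigma^\ast$, together with the verification that it bounds a sub-cylinder inside the limit region~$R^\ast$. This rests on Jordan-type planar topology in the cylinder, and it is where the main technical care concentrates; once it is in hand, the derivation of the length equalities from maximality is essentially formal.
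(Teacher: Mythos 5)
Your argument is correct in outline but follows a genuinely different route from the paper. You extremize over the family of simple non-contractible loops of length at most~$1$, ordered by how close the sub-cylinder~$R_\sigma$ sits to~$\beta$, and you extract both conclusions ($\length(\gamma)=1$ and $\sys(R_\gamma)=1$) from maximality, carrying the compactness via Zorn's lemma, an Arzel\`a--Ascoli limit, and a surgery step on the possibly non-simple limit curve. The paper instead considers the set~$X$ of simple non-contractible loops~$\sigma$ with $\sys(R_\sigma)=1$ (asserted non-empty, in effect by an intermediate-value sweep from~$\alpha$, where $\sys(R_\alpha)=\sys(M)<1$, toward~$\beta$), picks $\sigma_0\in X$, and takes~$\gamma$ to be a systolic loop of~$R_{\sigma_0}$: then $\length(\gamma)=\sys(R_{\sigma_0})=1$, the loop~$\gamma$ is simple, and the monotonicity $R_\gamma\subset R_{\sigma_0}$ gives $1=\sys(R_{\sigma_0})\le\sys(R_\gamma)\le\length(\gamma)=1$, closing the argument in three lines. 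The paper's version buys brevity by delegating compactness to the standard existence of systolic loops, at the price of the unjustified non-emptiness of~$X$; yours is more self-contained but accumulates technical debt exactly where you flag it: upper bounds for \emph{uncountable} chains (fixable by passing to a countable cofinal subchain, e.g.\ one along which $\area(R_\sigma)$ tends to its infimum), non-contractibility and simplification of the uniform limit, and the degenerate case where the maximal~$\gamma$ touches~$\beta$. One further small point for your step on $\sys(R_\gamma)=\length(\gamma)$: you should note explicitly that a shortest non-contractible loop of~$R_\gamma$ is simple and cannot be~$\beta$ (since $\length(\beta)>1$), so it genuinely yields an element strictly above~$\gamma$ in your order.
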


\begin{proof}
Let $X=\{ \sigma$ simple non-contractible loop in $M$ such that $\sys(R_\sigma)=1\}$,
where by  $R_\sigma$ we mean the cylinder of boundary components $\beta$ and $\sigma$.
Clearly the set $X$ is non empty.
Let $\ell=\inf_{\sigma\in X} \length(\sigma)$
and  $\varepsilon$ be  a small positive constant. 
By the definition of the infimum, there exists a simple non-contractible loop~$\sigma_0$
such that~$\sys(R_{\sigma_0})=1$ with~$\ell \leq \length(\sigma_0)\leq \ell +\varepsilon$.
The systolic loop~$\gamma$ of~$R_{\sigma_0}$ is 
a simple non-contractible loop 
in~$M$. Moreover, we have~$R_{\gamma} \subset R_{\sigma_0}$. Thus 
$$1=\sys(R_{\sigma_0}) \leq \sys(R_{\gamma})\leq \length(\gamma)=1.$$
So~$\sys(R_{\gamma})=1$. This finishes the proof. 
\end{proof}

\bigskip

\noindent In the proof of Theorem~\ref{maintheorem1.orientable} below,
we will need the following definition.

\begin{definition}
Let~$M$ be a closed  Riemann surface of genus~$g$
(with possibly one disk removed).
It is well known that such a surface can be obtained from a polygon~$P$
(with possibly one disk removed)
by pairwise identifications of its sides where all the vertices
of~$P$ get identified to a single point on~$x$ of~$M$.
Such a polygon, will be called a normal representation of~$M$.
After identification, the  edges  of~$P$ give rise to~$2g$
simple loops (in case~$M$ is orientable)
or to~$g$ simple loops (in case~$M$ is non-orientable)
based at~$x$ and intersecting each other only  at~$x$.
Such  set of loops is called a \emph{canonical system of loops}.
\end{definition}

\noindent Now we prove Theorem~\ref{maintheorem1}.

\begin{theorem}\label{maintheorem1.orientable}
Let~$M$ be a closed orientable Riemannian surface of
genus~$g\geq 2$. There are at least~$n=\ceil{\log(2g)+1}$
homotopically independent
loops~$\alpha_1,\ldots ,\alpha_n$
based at the same point such that for
all~$i=1,\ldots,n$,
$$\length(\alpha_i)\leq 2^{20}\frac{\log(g)}{\sqrt{g}}\sqrt{\area(M)}.$$
\end{theorem}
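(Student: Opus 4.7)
After rescaling the metric we may assume $\area(M)=g$, reducing the target bound to $\length(\alpha_i)\leq 2^{20}\log(g)$. We seek $n=\ceil{\log(2g)+1}$ homotopically independent loops based at a common point. The argument naturally splits into two cases according to the homotopical systole $\sys(M)$.

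\emph{Case 1: $\sys(M)\geq 1$.} Since $n<2g$ for $g\geq 2$, Theorem~\ref{maintheorem1.with.lower.bound} applied with $\ell=1$ directly produces $n$ homotopically independent loops based at a common point of length at most $24\cdot 2^{9}(\log(2g)+n)\leq 2^{20}\log(g)$.

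\emph{Case 2: $\sys(M)<1$.} Let $\alpha_0$ be a systolic simple closed geodesic. Expanding a one-sided collar of $\alpha_0$ until its outer boundary first reaches length $1$, Proposition~\ref{proposition.maximal.cylinder} yields a simple closed loop $\gamma\subset M$ of length exactly $1$ bounding an annulus $R_\gamma$ with $\sys(R_\gamma)=1$. The strategy is to apply Lemma~\ref{lemma.free.rank} with $a:=\gamma$; provided we can exhibit a second loop $b$ of length $\lesssim\log(g)$ based at a point $p\in\gamma$ with $\langle\gamma,b\rangle$ free of rank $2$ in $\pi_1(M,p)$, the lemma delivers $n$ homotopically independent loops whose lengths are bounded by $2(n-1)\length(\gamma)+\length(b)=2(n-1)+\length(b)\lesssim\log(g)$, as required.

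The second generator $b$ is produced via the cut-locus machinery of Section~\ref{section.cut.loci}. By Lemma~\ref{cut.locus.deformation} and Proposition~\ref{propo.cut.graph}, $\Cut(\gamma)$ is a finite graph that is a deformation retract of $M\setminus\gamma$, so its first Betti number is at least $2g-1$. An adaptation of Proposition~\ref{lengthofminimalgraph}, using $\sys(R_\gamma)=1$ in the role of $\ell$, bounds $\length(\Cut(\gamma))$ by a universal constant times $\area(M)=g$. Applying Corollary~\ref{corollary.maintheorem2.bis} to $\Cut(\gamma)$ provides many homologically independent loops in $\Cut(\gamma)\subset M\setminus\gamma$ based at a common vertex, each of length $\lesssim\log(g)$. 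Among them we select $\beta$ whose class in $H_1(M,\R)$ is linearly independent from $[\gamma]$; this is possible because the image of $H_1(\Cut(\gamma))\to H_1(M)$ has rank at least $2g-2\geq 2$. Transporting $\beta$ to the basepoint $p\in\gamma$ by a suitable minimizing arc yields $b$ based at $p$; linear independence of $[\gamma]$ and $[\beta]$ in $H_1(M,\R)$ prevents $\langle\gamma,b\rangle$ from being cyclic, and by Theorem~\ref{jaco} (with $k=2<2g$) it is consequently free of rank $2$.

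The main technical obstacle is twofold. First, one must establish $\length(\Cut(\gamma))\lesssim\area(M)$ by adapting the minimal-graph construction of Proposition~\ref{lengthofminimalgraph} from closed surfaces of bounded-below systole to cut loci of simple closed curves, with the systolic bound $\sys(R_\gamma)=1$ replacing the injectivity-radius hypothesis. Second, one must control the length of the arc transporting the basepoint from $\Cut(\gamma)$ to $\gamma$ so that the conjugated loop $b$ retains length $\lesssim\log(g)$; this relies on showing that the short-cycle basepoint furnished by the proof of Theorem~\ref{maintheorem2.bis} lies at distance $O(1)$ from $\gamma$, which follows from the systolic width estimate on $R_\gamma$ encoded in Proposition~\ref{proposition.maximal.cylinder}.
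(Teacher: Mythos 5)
Your Case 1 and the overall strategy of reducing everything to Lemma~\ref{lemma.free.rank} with a length-one generator $a$ match the paper in spirit, but your Case 2 has a genuine gap at its central step: the claim that $\length(\Cut(\gamma))\lesssim\area(M)=g$. Proposition~\ref{lengthofminimalgraph} cannot be "adapted" to give this, because its bound degrades like $1/\min\{1,\ell\}$ where $\ell$ is a lower bound on the systole of the \emph{whole} surface; in your Case 2 the systole of $M$ is by hypothesis less than $1$ and may be arbitrarily small, and the condition $\sys(R_\gamma)=1$ controls only the annulus $R_\gamma$, not $M\setminus\gamma$. The claim is in fact false in general: for a long thin flat torus $\R^2/(\Z a\oplus\Z b)$ with $|a|$ tiny, the cut locus of a closed geodesic (or a point) has length about $|b|=\area/|a|\gg\area$. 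Without a length bound on $\Cut(\gamma)$, Corollary~\ref{corollary.maintheorem2.bis} gives no $\log(g)$ control on the loops it produces, and your construction of the second generator $b$ collapses. A second, independent gap is the basepoint transport: the common basepoint delivered by the proof of Theorem~\ref{maintheorem2.bis} is a vertex somewhere in the graph $\Cut(\gamma)$, which can sit at distance comparable to $\diameter(M)$ from $\gamma$; nothing in Proposition~\ref{proposition.maximal.cylinder} bounds this by $O(1)$ or even $O(\log g)$, so the conjugating arc can destroy the length bound on $b$.

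The paper's proof avoids both problems by not working with a single short geodesic. It takes a \emph{maximal} family $X$ of pairwise disjoint, non-freely-homotopic simple closed geodesics of length at most $1$, chops off a "maximal" cylinder (via Proposition~\ref{proposition.maximal.cylinder}) on each side of each of them, and reglues to obtain a new closed surface $M_p$ of the same genus with $\area(M_p)\leq\area(M)$ and — after ruling out easy subcases that already yield two short non-commuting loops — $\sys(M_p)\geq 1$. Only then is Theorem~\ref{maintheorem1.with.lower.bound} applied (to $M_p$, where the systole hypothesis actually holds), and Lemma~\ref{lemma.two.independent.short.loops} transfers the resulting loops back to $M$. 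The cut-locus material of Section~\ref{section.cut.loci} is used only to split $M_k$ cut along a non-separating $\alpha_i$ into two cylinders, not to produce a short graph carrying the homology of $M$. If you want to salvage your outline, you would need to first perform a surgery of this kind to restore a systole lower bound before any graph-length argument can be invoked.
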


\begin{proof}[Proof of Theorem \ref{maintheorem1.orientable}]
Since every smooth metric can be approximated by a real analytic one,
we can assume that~$M$ is a real analytic Riemannian surface.
Multiplying the metric by a constant if needed, we can suppose that
the area of~$M$ is normalized to~$g$.
We only need to consider the case where the homotopical
systole of~$M$ is less than~$1$, since the other case is settled 
down by Theorem \ref{maintheorem1.with.lower.bound}.
Consider a maximal set~$X$ of simple closed geodesics
$\alpha_1,...,\alpha_p$ of length at most~$1$ which
are pairwise disjoint in~$M$ and non freely homotopic.
Let~$k$ be the number of elements of~$X$ that are separating.
Note that~$k\leq p$.
The main idea of the proof is to go back to the case
where the homotopical systole is at least~$1$.

\begin{remark}
At first, we were tempted to cut the surface~$M$
open along  the loops~$\alpha_i$ of~$X$ and  to
attach an hemisphere along each of the~$2p$ boundary components.
This yields  at least~$k+1$ new closed surfaces~$M_1,\ldots,M_{k+1}$,
where~$k$ is the number of geodesics in~$X$ that are separating.
We hoped  to find the desired loops or two short
homotopically independent loops based at the
same point in one of the closed surfaces~$M_i$.
Recall that the  homotopical systole of each~$M_i$ is at least~1
so we can use Theorem~\ref{maintheorem1.with.lower.bound}.
Afterwards we wanted to show that these loops do not
cross the hemispheres and so lie in the original surface~$M$.
It doesn't take much time to realize that this idea is naive.
One can run into many problems.
Let's imagine the  case where~$p=g$ and
all of the geodesics~$\alpha_i$ are
non-separating like the surface in Figure 2.
In this case, the surface obtained by cutting~$M$ along
the loops~$\alpha_i$ and attaching
hemispheres is of genus~0 and so the proof collapses.
Instead we will cut~$M$ along each~$\alpha_i$,
chop off some ``maximal" cylinders
and then glue the boundary components back together to obtain
a new surface with systole bounded away from zero.
\begin{figure}[H]
\begin{center}
\includegraphics[height=25mm]{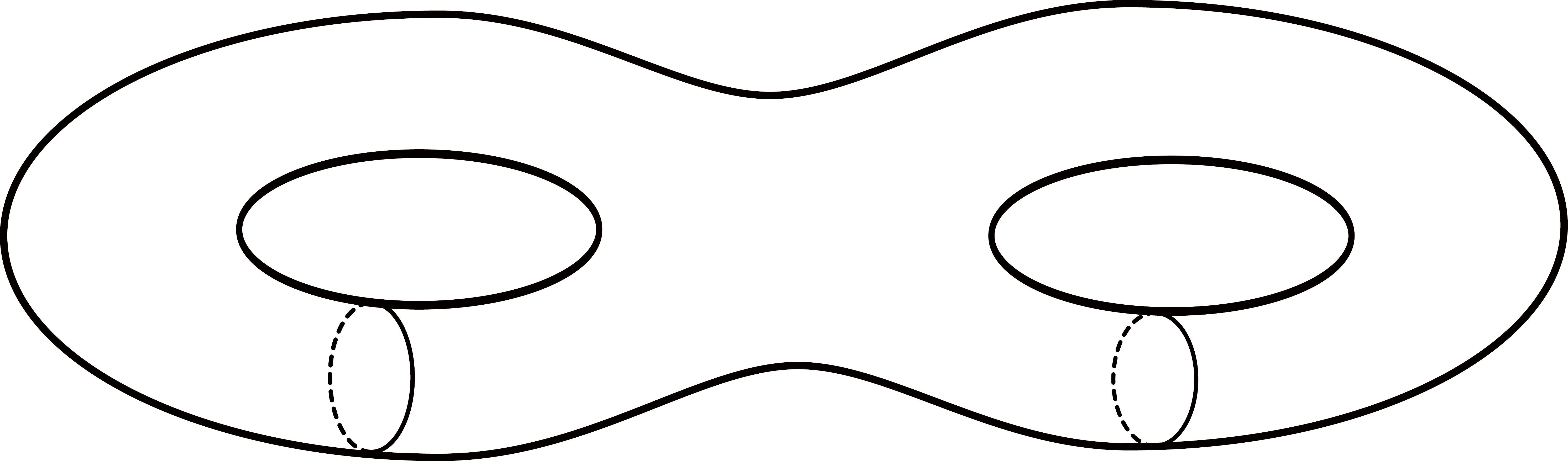}
\leavevmode
\put(-185,-3){\makebox(0,0){$\alpha_1$}}
\put(-60,-2){\makebox(0,0){$\alpha_2$}}
\caption{}
\end{center}
\end{figure}
\end{remark}

\noindent Let~$\varepsilon~\in~\{-,+\}$.
We divide the proof into 5 steps.
\\
\\
\noindent \emph{Step 1}. In this step we chop off
cylinders corresponding to short separating loops.
If~$k=~0$, we skip
this step and start directly at the second step.
By renumbering the~$\alpha_i$'s if needed,
we can suppose that for~$i~=~1,~\ldots,k$, the simple
closed geodesic~$\alpha_i$ is separating.
Cut the surface~$M$ open along~$\alpha_1$.
We obtain two compact surfaces~$M^-$
and~$M^+$ with signature~$(g-m,1)$
and~$(m,1)$, where~$m$ is some positive integer less than~$g$.
Denote by~$\alpha_1^\varepsilon$
the boundary of the surface~$M^\varepsilon$
and let~$S^\varepsilon$  be one of its canonical system of loops.
Notice that since the genus of~$M^\varepsilon$ is at least~1,
we have~$\card(S^\varepsilon)\geq 2$.
We can suppose that for every pair of loops~$a$ and~$b$ in~$S^{\varepsilon}$,
we have~$\sup(\length(a),\length(b))>1$.
Otherwise the proof is finished by  Lemma~\ref{lemma.free.rank}
since~$a$ and~$b$ do not commute and so generate a free group of rank~2.
Cut the surface~$M^\varepsilon$  open along the loops in~$S^\varepsilon$.
This gives rise to a cylinder~$T^\varepsilon$ with two boundary
components~$\alpha_1^{\varepsilon}$ and~$\beta_1^{\varepsilon}$
such that~$\length(\beta_1^{\varepsilon})>1$.
So the  cylinder~$T^\varepsilon$  satisfies the hypothesis of
Proposition~\ref{proposition.maximal.cylinder}.
Thus, there exists a non-contractible 
simple loop~$\gamma_1^\varepsilon$
of length 1 which is a systolic loop of the
cylinder~$R_1^\varepsilon$  bounded
by~$\beta_1^\varepsilon$ and~$\gamma_1^\varepsilon$
is 1. Cut~$T^\varepsilon$ along~$\gamma_1^\varepsilon$ and
throw away the cylinder~$C_1^\varepsilon$ bounded
by~$\alpha_1^\varepsilon$ and~$\gamma_1^\varepsilon$.
Now re-glue~$R_1^\varepsilon$
by pairwise identifications of the edges of~$\beta_1^\varepsilon$.
This gives rise to a compact surface~$M_1^\varepsilon$
with one boundary component~$\gamma_1^\varepsilon$  of length~1.
Glue the surfaces~$M_1^-$ and~$M_1^+$ along
their boundaries~$\gamma_1^-$ and~$\gamma_1^+$.
The resulting surface~$M_1$,  satisfies the following.
\begin{itemize}
\item The surface~$M_1$ has the same genus as the surface~$M$;
\item $\area(M_1) \leq \area(M)$;
\item A minimal representative in~$M_1$ of the
free homotopy class  of~$\alpha_1$ is given by the 
simple loop~$\gamma_1$ of length 1 
obtained by gluing~$\gamma_1^-$ and~$\gamma_1^+$ together.
\end{itemize}
Repeat the above  process with the~$k-1$ remaining
elements of~$X$ that are separating.
This gives rise to a closed surface~$M_k$ of the same genus as the surface~$M$
such that~$\area(M_k) \leq \area(M)$. Moreover, any simple closed geodesic
of~$M_k$ of length less than~$1$ is non-separating.
Perturbing the metric again, we can suppose again that it is a real analytic one.

\medskip

\noindent \emph{Step 2.} In this step, we chop off
cylinders corresponding to short non-separating loops.
Cut the surface~$M_k$ open along~$\alpha_{k+1}$.
This leads to a surface~$N_k$ with genus~$g-1$ and with
two boundary components~$\alpha_{k+1}^-$ and~$\alpha_{k+1}^+$.
By Lemma~\ref{cut.locus.deformation}, we know that the cut locus~$\Cut(\alpha_{k+1})$
of~$\alpha_{k+1}$ is a deformation retract of~$M\setminus \{\alpha_{k+1}\}$.
So the fundamental group of~$\Cut(\alpha_{k+1})$ is
isomorphic to the fundamental group of~$N_k$.
Now cut the surface~$N_k$ open along~$\Cut(\alpha_{k+1})$.
This gives rise to two cylinders.
The cylinder~$T_{k+1}^-$ with
boundary components~$(\alpha_{k+1}^-,\beta_{k+1}^-)$
and the cylinder~$T_{k+1}^+$ with boundary
components~$(\alpha_{k+1}^+,\beta_{k+1}^+)$.
Arguing as in Step 1, we can suppose
that~$\length(\beta_{k+1}^\varepsilon)>1$.
So the cylinder~$T_{k+1}^\varepsilon$ satisfies the hypothesis
of Proposition \ref{proposition.maximal.cylinder}.
Thus there exists a  non-contractible simple 
loop~$\gamma_{k+1}^\varepsilon$ of length 1
which is a systolic loop of the cylinder~$R_{k+1}^\varepsilon$
of boundary components~$(\beta_{k+1}^\varepsilon,\gamma_{k+1}^\varepsilon)$  is 1.
Cut~$T_{k+1}^\varepsilon$ open along~$\gamma_{k+1}^\varepsilon$
and throw away the cylinder~$C_{k+1}^\varepsilon$
bounded by~$\alpha_{k+1}^\varepsilon$ and~$\gamma_{k+1}^\varepsilon$.
Now re-glue the cylinder~$R_{k+1}^\varepsilon$ by re-identifying
the sides of~$\beta_{k+1}^\varepsilon$.
This gives rise to two compact surfaces~$M_{k+1}^-$ and~$M_{k+1}^+$
with boundary components that can be pairwise identified.
Gluing these two surfaces together we get a closed 
surface~$M_{k+1}$ that satisfies the following.
\begin{itemize}
\item The surface~$M_{k+1}$ has the same genus as the surface~$M_k$.
\item~$\area(M_{k+1}) \leq \area(M_{k})$.
\item A minimal representative of the free homotopy class
of~$\alpha_{k+1}$ in~$M_{k+1}$ is given by the simple loop~$\gamma_{k+1}$
of length 1, obtained by gluing~$\gamma_{k+1}^-$ and~$\gamma_{k+1}^+$ together.
\end{itemize}
Repeat the above  process with  the~$p-k-1$
remaining elements of~$X$.
This gives rise to a closed surface~$M_p$
of the same genus as the surface~$M$
such that~$\area(M_p) \leq \area(M)$.

\medskip

\noindent Before proceeding to the next step,
recall that the simple closed geodesics~$\alpha_1,\ldots,\alpha_p$ in the original surface~$M$
correspond to the simple closed geodesics~$\gamma_1,\ldots,\gamma_p$ in the surface~$M_p$.
Also recall that the cylinders~$C_i^-$ and~$C_i^+$ in~$M$ 
share the same boundary component~$\alpha_i$.
We denote by~$C_i$ the cylinder with boundary components~$(\gamma_i^-,\gamma_i^+)$,
that is,~$C_i= C_i^+\cup C_i^-$.

\medskip

\noindent \emph{Step 3}. In this step, we  show that
we can suppose that two different cylinders~$C_j$
and~$C_{j'}$ in~$M$ are distant from each other.
Specifically, we have~$\dist_M(C_j ,C_{j'})>2^{18}\log(g)$.
In other words, we have
\begin{eqnarray}
\dist_{M_p}(\gamma_j,\gamma_{j'})>2^{18}\log(g).
\end{eqnarray}
Indeed, suppose the opposite. Without loss of generality,
suppose that the distance between~$C_j$ and~$C_{j'}$
is equal to~$\dist(\gamma_j^-,\gamma_{j'}^-)$.
Let~$z_1$ be a point on~$C_j$ and~$z_2$ be a point on~$C_{j'}$
such that~$\dist(z_1,z_2)=\dist(\gamma_j^-,\gamma_{j'}^-)$.
Consider the loop~$\mu$ that starts at~$z_1$, travels along a
minimizing geodesic between~$z_1$ and~$z_2$,
makes a complete tour along~$\gamma_{j'}^-$ and then comes back to~$z_1$.
We have that~$\length(\mu) \leq 2^{19}\log(g)+1$.
Notice also that~$\mu$ and~$\gamma_j^-$ do not commute.
In particular, they are homotopically independent.
So by  Lemma~\ref{lemma.free.rank} (take~$a=\gamma_j^-$ and~$b=\mu$),
the proof of the theorem is finished.

\medskip

\noindent \emph{Step 4}. In this step, we show
that we can suppose that
$$\sys(M_p)~\geq~1.$$
Indeed, by contradiction, suppose that there is a systolic loop~$\mu$ 
of~$M_p$ of length less than~1.
We claim that the geodesic~$\mu$  transversally
intersects at least one of the~$\gamma_i's$.
Indeed, suppose the opposite, and
denote by~$\mu'$ the simple closed geodesic
in the original surface~$M$
that corresponds to~$\mu$.
Since~$\mu$ does not transversally
intersects any of the~$\gamma_i's$,
the loop~$\mu'$ is disjoint from all the cylinders~$C_i$.
In particular,~$\mu'$ does not intersect any of the loops~$\alpha_i$.
This contradicts the maximality of~$X$, 
since~$\length(\mu')<1$.
\indent Let~$j\in \{1,\ldots,n \}$ be such that~$\mu$
transversally intersects~$\gamma_j$.
That means that in the surface~$M$, the loop~$\mu'$
goes across the cylinder~$C_j$.
Now we claim that~$\mu$ intersects only one~$\gamma_j$.
Indeed, the length of~$\mu'$ is less than~$1$ and the distance
between any pair of cylinders~$C_j$ and~$C_{j'}$ is greater than~$1$.
Therefore,~$\mu$ intersects only one~$\gamma_j$.
Moreover, the two minimizing simple 
loops~$\mu$ and~$\gamma_j$ do not commute.

\medskip

\begin{lemma}\label{lemma.two.independent.short.loops}
Let~$\beta$ be a loop in~$M_p$ of length less than~$L$
that transversally intersects only one geodesic~$\gamma_j$ and does not commute with it.
Then there exist two loops~$a,b$ based at the same  point
in the original surface~$M$
that do not commute and such that~$\length_M(a)=1$
and~${\length}_M(b)\leq 2L+1$.
In particular, the loops~$a$ and~$b$ are homotopically independent.
\end{lemma}

\begin{proof}
We give~$\beta$ and~$\gamma_j$ some orientation.
Let~$x_1,\ldots,x_q$ be the transversal  intersection
points of~$\beta$ and~$\gamma_j$ counted with multiplicity and
ordered in the sense that if we start walking on~$\beta$,
then~$x_i$ is the~$i-th$ time~$\beta$ intersects~$\gamma_j$.
Suppose that~$q\geq 2$ (the case~$q=1$ will be treated in the end of the proof).
Let~$\beta_{i,i+1}$ be the simple loop based
at~$x_i$ defined as the concatenation of the
oriented arc of~$\beta$ between~$x_i$ and~$x_{i+1}$ and the oriented
arc~$c_{i+1,i}$ of~$\gamma_j$ between~$x_{i+1}$ and~$x_i$.
The loop~$\beta$ is homotopic to the loop
$\beta_{1,2}c_{1,2}\ldots\beta_{q,q+1}c_{q,q+1},$
where by convention~$c_{i,i+1}$ is the inverse of~$c_{i+1,i}$, and~$x_{q+1}=x_1$.
\\
\indent
Notice from the above equality that at least one
of the curves~$\beta_{i,i+1}c_{i,i+1}$ does not commute with~$\gamma_j$, for otherwise we
will have that~$\beta$ commute with~$\gamma_j$, which is a contradiction.
\\
\indent
Now let~$\beta_{k,k+1}c_{k,k+1}$ be one of the curves~$\beta_{i,i+1}c_{i,i+1}$
that does not commute  with~$\gamma_j$.
The curve~$\beta_{k,k+1}c_{k,k+1}$ is homotopic to~$\beta_{k,k+1}$,
so in particular~$\beta_{k,k+1}$ does not commute with~$\gamma_j$.
Recall that the surface~$M$ can be obtained from the surface~$M_p$
by cutting along the~$\gamma_i$'s and re-inserting the cylinders~$C_i$.
Thus, the loop in~$M$ that corresponds to~$\beta$ decomposes 
into a union  of curves whose endpoints  lie on one 
of the two boundary components~$\gamma_j^-$ 
and~$\gamma_j^+$ of the cylinder~$C_j$.
Denote by~$x'_k$ and~$x'_{k+1}$ the points in~$M$ corresponding
to the points~$x_k$ and~$x_{k+1}$ of~$\beta_{k,k+1}$ in~$M_p$.
We have two cases.
\\
\\
\emph{Case 1}. The points~$x'_k$ and~$x'_{k+1}$  lie both the 
same boundary component, say~$\gamma_j^+$.
\\ In this case, let~$\beta'$ be the simple loop in~$M$ that corresponds to~$\beta_{k,k+1}$
(See Figure 3).

\begin{minipage}{\linewidth}
\begin{center}
\includegraphics[keepaspectratio=true,scale=0.2]{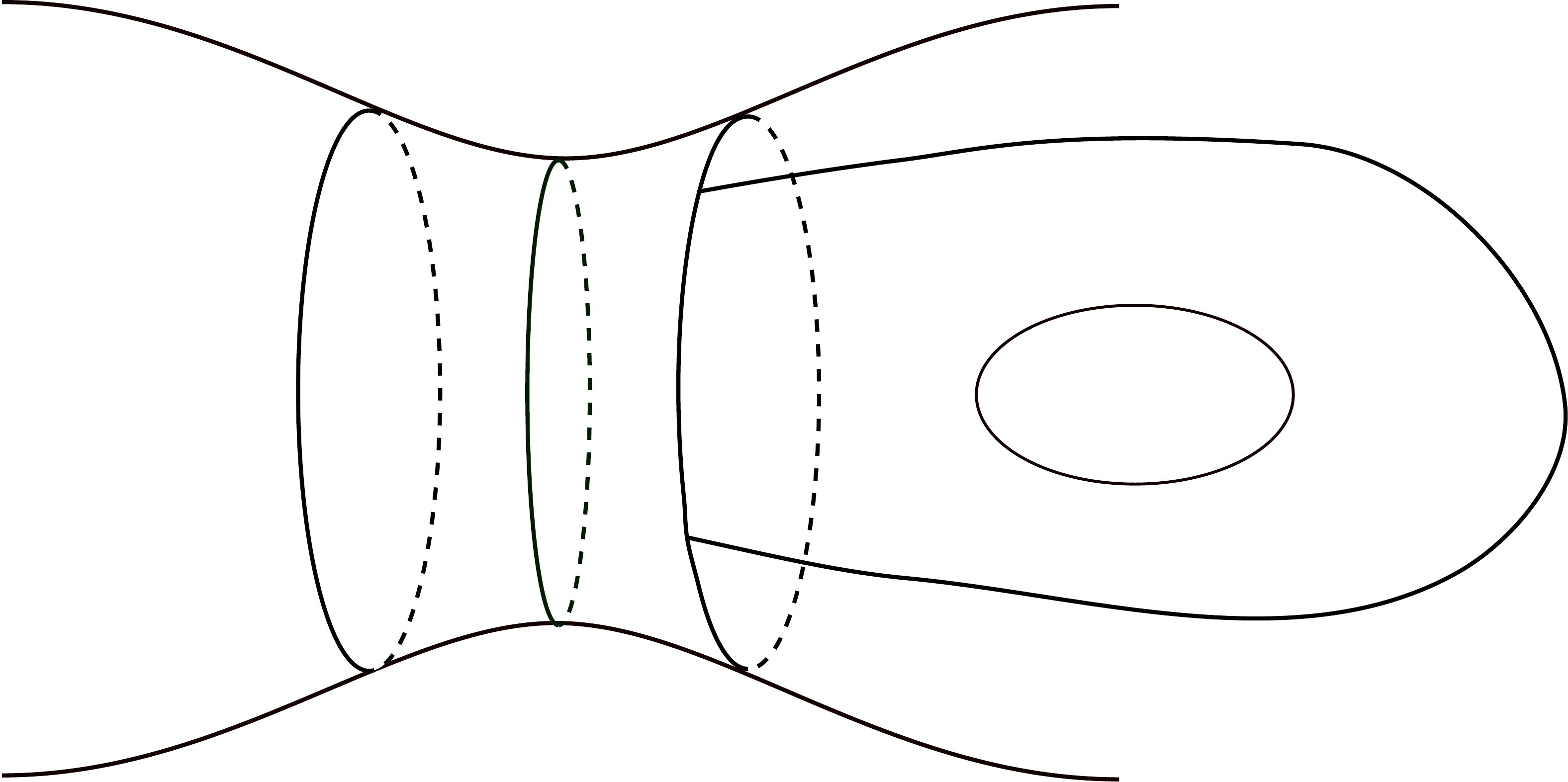}
\put(10,80){\makebox(0,0){$M$}}
\put(-10,50){\makebox(0,0){$\beta'$}}
\put(-115,10){\makebox(0,0){$\alpha_j$}}
\put(-135,5){\makebox(0,0){$\gamma_j^-$}}
\put(-100,3.5){\makebox(0,0){$\gamma_j^+$}}
\captionof{figure}{}
\end{center}
\end{minipage}

\noindent Take~$a=\gamma_j^+$ and~$b= \beta'$.
These two loops are based at the same point and do not commute.
Moreover we have~$\length(a)=1$ 
and~$\length(b)\leq L+1$.
\\
\\
\emph{Case 2}. The points~$x'_k$ and~$x'_{k+1}$ do not lie both on~$\gamma_j^-$ or~$\gamma_j^+$.
\\In this case, let~$\beta'$ be the arc in~$M$ that corresponds
to the  arc of~$\beta$ between~$x_k$ and~$x_{k+1}$.
\\
\\
\begin{minipage}{\linewidth}
\begin{center}
\includegraphics[keepaspectratio=true,scale=0.2]{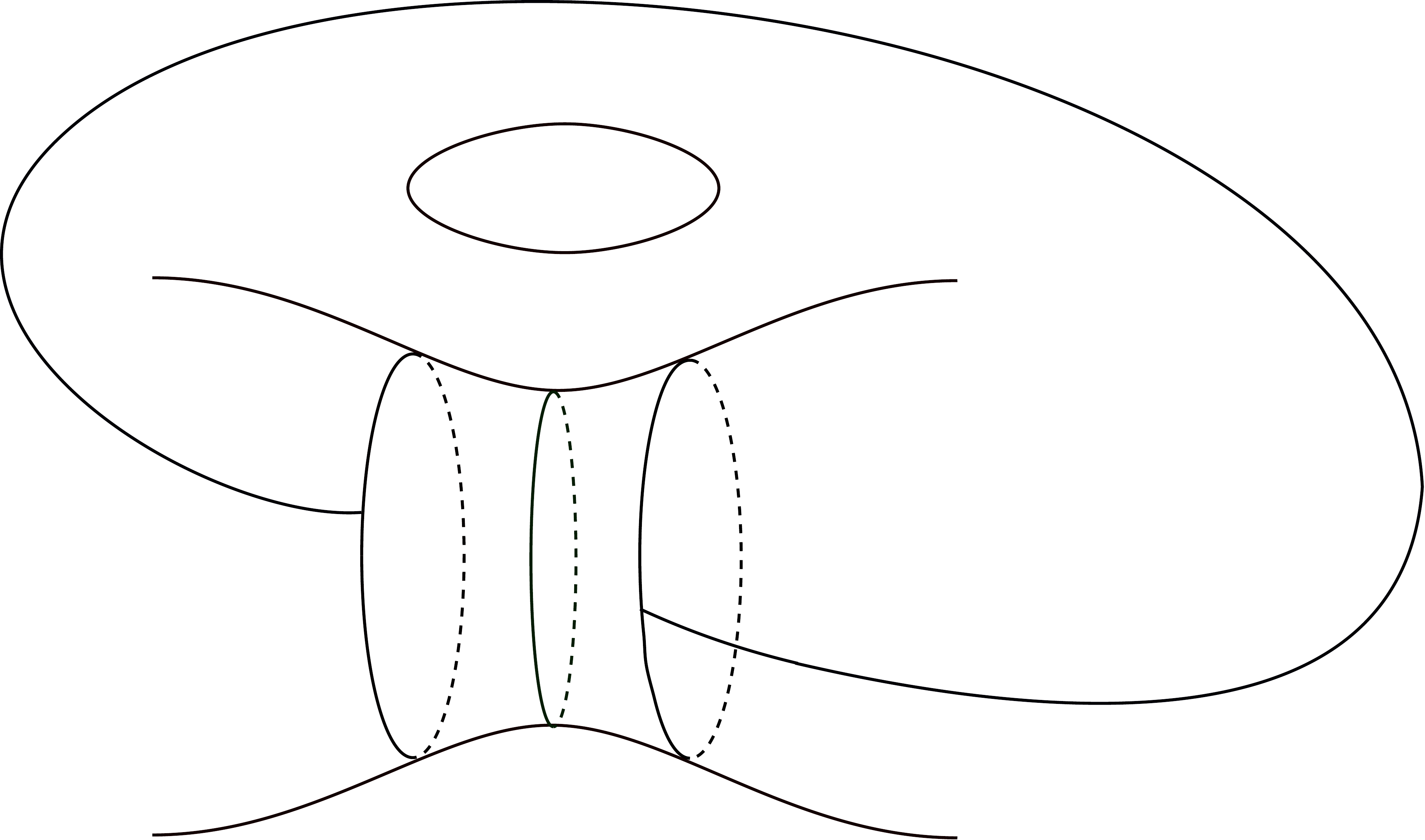}
\put(15,100){\makebox(0,0){$M$}}
\put(-10,50){\makebox(0,0){$\beta'$}}
\put(-140,10){\makebox(0,0){$\alpha_j$}}
\put(-165,3){\makebox(0,0){$\gamma_j^-$}}
\put(-120,3.5){\makebox(0,0){$\gamma_j^+$}}
\captionof{figure}{}
\end{center}
\end{minipage}
\noindent Take~$a=\gamma_j^+$ and~$b= \beta'\gamma_j^+\beta'^{-1}$.
These two loops are based at the same point and do not commute.
Moreover we have~$\length(a)=1$ and~$\length(b)\leq 2L+1$.
\\

Finally, if the number of intersections~$q=1$, we argue 
exactly like in case 2 above, supposing that~$x_{k+1}=x_k$.
That finishes the proof of the Lemma.
\end{proof}

Now, apply Lemma~\ref{lemma.two.independent.short.loops} with $\beta=\mu$
and make use of Lemma~\ref{lemma.free.rank} to finish the proof.

\medskip

\noindent \emph{Step 5}. By Theorem \ref{maintheorem1.with.lower.bound},
there are at least~$n=\ceil{\log(2g)+1}$ homotopically independent
geodesic loops~$\mu_1\,\ldots,\mu_n$
based at the same point in~$M_p$ with
$$\length(\mu_i)\leq 2^{18}\log(g).$$
If these loops are in the original surface~$M$,~$\ie$,  they don't
transversally  intersect any of the loops~$\gamma_i$ in~$M_p$,
then the proof is finished.
So suppose the opposite.
Let~$\mu$ be one the loops~$\mu_1\,\ldots,\mu_n$ that transversally  intersects
at least one of the~$\gamma_i$'s in~$M_p$.
From~(6.1), the loop~$\mu$ (transversally) intersects
exactly one loop~$\gamma_j$ in~$M_p$.
By Lemma~\ref{lemma.two.independent.short.loops},
we show that there exist two loops~$a,b$
in the original surface~$M$ based
at the same point with~$\length(a)=1$
and~$\length(b)\leq 2^{19}\log(g)+1$.
The result follows from Lemma~\ref{lemma.free.rank}.
\end{proof}

\begin{remark}
Theorem \ref{maintheorem1.orientable} extends to non-orientable surfaces
with multiplicative constant~$2^{22}$ instead of~$2^{20}$
by passing to the double oriented cover.
\end{remark}

\begin{corollary}
There exists a positive constant~$C$ such that the separating systole
of every closed Riemannian surface~$M$ of genus~$g\geq 2$ and area~$g$
satisfies
$$\sys_0(M)\leq C\log(g).$$
\end{corollary}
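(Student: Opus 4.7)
The plan is to combine Theorem~\ref{maintheorem1} with the commutator observation already made in Section~\ref{section.benefits}. Since the statement is about a ratio invariant under rescaling the metric, I would first rescale so that $\area(M)=g$, which is exactly the hypothesis under which Theorem~\ref{maintheorem1} applies directly.

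By Theorem~\ref{maintheorem1}, there exist $\lceil \log(2g)+1 \rceil \geq 2$ homotopically independent loops $\alpha_1,\ldots,\alpha_{\lceil \log(2g)+1\rceil}$ based at a common point $v \in M$, each of length at most $C\log(g)$ for a universal constant $C$ (using $\sqrt{\area(M)}=\sqrt{g}$, so the bound $C\log(g)/\sqrt{g}\cdot\sqrt{g}$ simplifies to $C\log(g)$). In particular, I can extract the first two loops $\alpha_1,\alpha_2$, which generate a free subgroup of rank~$2$ in $\pi_1(M,v)$ by definition of homotopic independence.

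Now I form the commutator $\gamma = \alpha_1\alpha_2\alpha_1^{-1}\alpha_2^{-1}$. Two properties of $\gamma$ settle the corollary. First, $\gamma$ is homologically trivial in $H_1(M,\mathbb{Z})$ since any commutator lies in the commutator subgroup, which is the kernel of the Hurewicz map $\pi_1(M,v)\to H_1(M,\mathbb{Z})$. Second, $\gamma$ is non-contractible in $M$: indeed, in a free group of rank~$2$ the commutator of the two free generators is a non-trivial reduced word, so $\gamma \neq 1$ in the subgroup $\langle\alpha_1,\alpha_2\rangle$, and hence $\gamma$ represents a non-trivial element of $\pi_1(M,v)$. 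Thus $\gamma$ is a legitimate candidate for the separating systole.

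Finally, the length bound is immediate from the triangle inequality applied to the concatenation defining $\gamma$:
\[
\length(\gamma) \leq 2\length(\alpha_1) + 2\length(\alpha_2) \leq 4C\log(g),
\]
so $\sys_0(M)\leq 4C\log(g)$, which proves the corollary with constant $4C$. There is no real obstacle here: the nontrivial content is entirely inside Theorem~\ref{maintheorem1}, and the only subtlety worth checking is that $\alpha_1,\alpha_2$ really are based at a common point (built into the statement of Theorem~\ref{maintheorem1}) and really are free generators rather than merely homologically independent, which is precisely why the paper insisted on the stronger notion of homotopic independence in the first place.
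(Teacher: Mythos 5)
Your proposal is correct and follows essentially the same route as the paper: extract two of the homotopically independent loops from Theorem~\ref{maintheorem1}, form their commutator, and observe that it is non-contractible (by freeness of rank~$2$), null-homologous, and of length at most $4C\log(g)$. The paper's own proof is just a terser version of this argument, which it also previews in Section~\ref{section.benefits}.
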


\begin{proof}
From Theorem~\ref{maintheorem1.orientable}, there exist two
non-commuting loops~$a$ and~$b$ based at the same point of length at
most~$c\log(g)$ for some positive constant~$c$.
The commutator~$[a,b]$ of~$a$ and~$b$,
of length at most~$4c\log(g)$,
yields a bound on the separating systole of~$M$.
\end{proof}

\end{document}